\theoremstyle{plain}
\newtheorem{theorem}{Theorem}[section]
\newtheorem{corollary}[theorem]{Corollary}
\newtheorem{lemma}[theorem]{Lemma}
\newtheorem{proposition}[theorem]{Proposition}
\theoremstyle{definition}
\newtheorem{example}[theorem]{Example}
\newtheorem{remark}[theorem]{Remark}
\theoremstyle{remark}
\newcommand{\fraka}{\mathfrak a}
\newcommand{\frakf}{\mathfrak f}
\newcommand{\frakp}{\mathfrak p}
\newcommand{\la}{\lambda}
\newcommand{\La}{\Lambda}
\newcommand{\Z}{\mathbb{Z}}
\newcommand{\A}{\mathbb{A}}
\newcommand{\N}{\mathbb{N}}
\newcommand{\C}{\mathbb{C}}
\newcommand{\K}{\mathbb{K}}
\newcommand{\F}{\mathbb{F}}
\newcommand{\cc}{\mathcal{C}}
\begin{document}

\title[Explicit formulas for Drinfeld modules]
{Explicit formulas for Drinfeld modules \\ and their periods}

\author{Ahmad El-Guindy}
\address{Current address: Science Program, Texas A\&M University in Qatar, Doha, Qatar}
\address{Permanent address: Department of Mathematics, Faculty of Science, Cairo University, Giza, Egypt 12613}
\email{a.elguindy@gmail.com}

\author{Matthew A. Papanikolas}
\address{Department of Mathematics, Texas A\&M University, College Station, TX 77843, U.S.A.}
\email{map@math.tamu.edu}

\keywords{Drinfeld modules, exponentials, logarithms, periods, supersingularity}
\subjclass[2010]{11G09, 11F52, 11R58}
\thanks{Research of the second author was partially supported by NSF Grant DMS-0903838.}
\date{December 22, 2011}

\begin{abstract}
We provide explicit series expansions for the exponential and logarithm functions attached to a rank $r$ Drinfeld module that generalize well known formulas for the Carlitz exponential and logarithm.  Using these results we obtain a procedure and an analytic expression for computing the periods of rank 2 Drinfeld modules and also a criterion for supersingularity.
\end{abstract}

\maketitle

\section{Introduction}
The goal of this paper is to determine explicit formulas for exponential functions, logarithms, and periods of Drinfeld modules.  Originally Carlitz~\cite{Carlitz35} and Wade~\cite{Wade46} worked out a complete picture for the Carlitz module by deducing closed formulas for both the power series expansions of its associated exponential and logarithm functions and the Carlitz period.  Building on work of Hayes~\cite{Hayes79} on sgn-normalized Drinfeld modules of rank~$1$, Gekeler~\cite{GekelerBook} determined formulas for periods of rank~$1$.  Later Thakur~\cite{Thakur92}, \cite{Thakur93}, building on Hayes' work, found explicit formulas for exponentials and logarithms of rank~$1$ modules using special values of shtuka functions.  (See also \cite[Ch.~3, 4, 7]{GossBook} and \cite[Ch.~2, 8]{ThakurBook} for more information.)  In the current paper we have attempted to continue these investigations in a similar spirit for Drinfeld modules of arbitrary rank by developing a combinatorial framework of ``shadowed partitions'' to keep track of coefficient data.  As a result we obtain explicit formulas for the exponential and logarithm functions and for periods, and we further obtain a precise criterion for supersingularity that complements previous work of Cornelissen~\cite{Cornelissen99a}, \cite{Cornelissen99b} and Gekeler~\cite{Gekeler88}.

Let $q$ be a power of a prime $p$, and let $\F_q$ denote the field with $q$ elements. Consider the polynomial ring $\A=\F_q[T]$, and let $\K$ denote the fraction field of $\A$. Consider the unique valuation on $\K$ defined by
\[
v(T)=-1,
\]
which is the valuation at the ``infinite prime'' of the ring $\A$. Let $\K_\infty$ denote the completion of $\K$ with respect to $v$, and let $\C_\infty$ denote the completion of an algebraic closure of  $\K_\infty$. It is well known that $v$ has a unique extension to $\C_\infty$ that we still denote by $v$, and that $\C_\infty$ is a complete algebraically closed field. For any integer $n \in \N=\{0,1,\dots\}$ we write
\begin{equation}\label{dn}
\begin{split}
[n]&:=T^{q^n}-T,\\
D_n&:=[n][n-1]^q[n-2]^{q^2}\cdots [1]^{q^{n-1}},\quad D_0:=1,\\
L_n&:=(-1)^n [n][n-1]\cdots[2][1],\quad L_0:=1.
\end{split}
\end{equation}
A field $L$ is called an \emph{$\A$-field} if there is a nonzero homomorphism $\imath:{\A}\to L$. Examples of such fields are  extensions of either $\K$ or $\A/\mathfrak{p}$, where $\mathfrak{p}$ is a nonzero prime ideal of $\A$. For simplicity we will write $a$ in place of $\imath(a)$ when the context is clear.  Such a field has a \emph{Frobenuis homomorphism}
\[
\begin{split}
\tau:L&\to L\\
z&\mapsto z^q,
\end{split}
\]
and we can consider the ring $L\{\tau\}$ of polynomials in $\tau$ under addition and composition. Thus $\tau \ell=\ell^q\tau$ for any $\ell \in L$. A \emph{Drinfeld module of rank $r$ over $L$} is an $\F_q$-linear ring homomorphism $\phi:\A[T]\to L\{\tau\}$ such that
\begin{equation}
\phi_T=T+\sum_{i=1}^r A_i\tau^i, \quad A_i\in L,\quad A_r\neq 0.
\end{equation}
It then follows that the constant term of $\phi_a$ is $a$ for all $a \in \A$ and that the degree of $\phi_a$ in $\tau$ is $r\deg_T(a)$.

The simplest example of a Drinfeld module is the  \emph{Carlitz module} $\mathcal{C}$ given by
\[
\mathcal{C}_T=T+\tau,
\]
which has rank $1$.  Associated to the Carlitz module is the \emph{Carlitz exponential}
\begin{equation}\label{Cexp}
e_\cc(z):=\sum_{n=0}^\infty \frac{z^{q^n}}{D_n}.
\end{equation}
The series for $e_\cc$ converges for all $z\in \C_\infty$ and defines an entire, $\F_q$-linear, and surjective function. The key property connecting the Carlitz exponential to the Carlitz module is
\begin{equation}\label{CFE}
e_\cc(Tz)=\cc_T(e_\cc(z)),
\end{equation}
from which it follows that for all $a \in \A$,
\[
  e_\cc(az) = \cc_a(e_\cc(z)).
\]
The zeros of $e_\cc(z)$ form an $\A$-lattice of rank one in $\C_\infty$ with a certain generator $\pi_\cc\in \C_\infty$ called the \emph{Carlitz period}, and we have an alternate expression for the Carlitz exponential as
\begin{equation}\label{Clattice}
e_\cc(z)=z\prod_{0\neq\la\in \pi_\cc\A} \left(1-\frac{z}{\la}\right).
\end{equation}
It is useful to also consider the (local) composition inverse of $e_\cc$, called the \emph{Carlitz logarithm}, defined by
\begin{equation}\label{Clog}
\log_\cc(z):=\sum_{n=0}^\infty \frac{z^{q^n}}{L_n},\quad v(z)>\frac{-q}{q-1}.
\end{equation}
These results go back to Carlitz~\cite{Carlitz35} and Wade~\cite{Wade46}, who were investigating explicit class field theory over $\F_q(T)$.  See \cite[Ch.~3]{GossBook}, \cite[Ch.~2]{ThakurBook} for more details on the above constructions.

Analogues of \eqref{CFE}, \eqref{Clattice}, and \eqref{Clog} hold for any Drinfeld module over $\C_\infty$ (see \cite[Ch.~4]{GossBook}, \cite[Ch.~2]{ThakurBook} for more details).  Indeed in \cite{Carlitz95}, Carlitz himself had begun to study lattice functions for higher rank lattices long before Drinfeld developed the complete story.  In particular, if $\La\subset \C_\infty$ is an $\A$-lattice of rank $r$ then the \emph{lattice exponential function} defined by
\begin{equation}\label{lattice}
e_\La(z):=z\prod_{0\neq\la\in\La} \left(1-\frac{z}{\la}\right)
\end{equation}
is an entire, surjective, $\F_q$-linear function from $\C_\infty$ to $\C_\infty$ with kernel $\La$, and there exists a unique rank $r$ Drinfeld module $\phi=\phi(\La)$ such that
\begin{equation}\label{FE}
e_\La(Tz)=\phi_T(e_\La(z)).
\end{equation}
Furthermore, $e_\La$ has a series expansion of the form
\begin{equation}\label{expseries}
e_\La(z)=\sum_{n=0}^\infty\alpha_nz^{q^n}.
\end{equation}
It also has a local composition inverse $\log_\La$ with a series expansion
\begin{equation}\label{logseries}
\log_\La(z)=\sum_{n=0}^\infty\beta_nz^{q^n}.
\end{equation}
Note that the coefficients $\alpha_n\in\C_\infty$ (and consequently $\beta_n$) could be expressed in terms of $\La$ by expanding \eqref{lattice}: for instance
\begin{equation}\label{coeff}
\begin{split}
\alpha_1&=\sum_{\begin{split}\{\la_1,\dots,\la_{q-1}\}\subset \La\\
\la_i\neq \la_j\textrm{ for } i\neq j\end{split}}\prod_{i=1}^{q-1}\frac{1}{ \la_i},\\
\beta_1&=-\alpha_1,
\end{split}
\end{equation}
which is explicit, but rather complicated and impractical as it involves infinitely many terms. We can also describe the Drinfeld module $\phi(\La)$ in terms of $\La$ as follows. Start by noticing that $\La/T\La$ is a vector space of dimension $r$ over $\F_q$. Write
\begin{equation}
f(x):=\prod_{\la \in \La/T\La}\left(x-e_\La\left(\frac{\la}{T}\right)\right).
\end{equation}
It is well-known (see \cite[\S 4.3]{GossBook}) that $f(x)$ is $\F_q$-linear of degree $q^r$, hence of the form $f(x)=\sum_{n=0}^r A_n(\La)x^{q^n}$ for some $A_n(\La)\in \C_\infty$. Furthermore we have
\begin{equation}\label{module}
\phi_T(\La)=\sum_{n=0}^r A_n(\La)\tau^n.
\end{equation}

The general theme of the paper is in some sense to reverse the point of view of the previous paragraphs, and  provide explicit identities for the lattice $\La$, as well as the functions $e_\La$ and $\log_\La$, starting only from the knowledge of the Drinfeld module $\phi$. This is achieved by using relatively simple combinatorial objects that we name ``shadowed partitions'' which we introduce and study in \S 2. Using them, in \S 3 we obtain concrete formulas for $e_\La$ and $\log_\La$ (Theorem~\ref{alpha} and Theorem~\ref{beta}) that are as similar as could be hoped for to \eqref{Cexp} and \eqref{Clog}.  In \S 4 we restrict our attention to rank $2$ modules, and we proceed to study the convergence properties of $\log_\La$ making use of the detailed description we have for its coefficients (Corollary~\ref{logorder} and Corollary~\ref{logrange}).  In \S 5 we  examine the properties of $T$-torsion points of rank two Drinfeld modules, and show how, combined with the properties of $\log_\La$, we can recover at least one, and sometimes both generators of the lattice $\La$ in certain naturally defined ``families'' (Theorem~\ref{periods}), thus in some sense obtaining a converse of \eqref{module}.  In \S 6 we introduce additional conditions that enable us to obtain a completely analytic description for the period with maximal valuation (Theorem~\ref{maxval}).  In \S 7 we compare our results to an example of Thakur~\cite{Thakur92} arising from Drinfeld modules with complex multiplication.  Finally in \S 8 we study yet another application of shadowed partitions, where we introduce a ``multinomial'' theorem for any rank $r$ Drinfeld module (Theorem~\ref{multinomialthm}) and obtain as a consequence a concrete condition for supersingularity of a rank $2$ Drinfeld module at a prime $\frakp\in \A$ of any degree (Corollary~\ref{supersingular}).

\textbf{A note on notation.} In order to emphasize that our starting point is the  Drinfeld module rather than the lattice, from now on we shall write $e_\phi$, $\log_\phi$, and $A_n(\phi)$ instead of $e_\La$, $\log_\La$, and $A_n(\La)$, respectively.

\section{Shadowed partitions}
Recall that  a \emph{partition} of a set $S$ is a collection of subsets of $S$ that are pairwise disjoint,  and whose union is equal to $S$ itself. Also, if $S\subset \Z$, $j\in \Z$, then $S+j:=\{i+j: i\in S\}$.  For $r\in \N$ and $n \in \Z^+$, we set
\begin{multline}\label{prndefn}
P_r(n):=\bigl\{(S_1,\, S_2, \dots,\, S_r): S_i\subset \{0,\, 1, \dots,\, n-1\},\\
 \textnormal{and $\{S_i+j: 1\leq i \leq r,\, 0\leq j \leq i-1\}$ form a partition of  $\{0,\, 1, \dots,\, n-1\}$} \bigr\}.
\end{multline}
We also set $P_r(0):=\{\emptyset\}$ and $P_r(-n):=\emptyset$.
We propose to name elements of $P_r(n)$ as \emph{order~$r$ index-shadowed partitions of $n$}, or \emph{shadowed partitions} for short,  as each $S_i$ relies on its~$i$ ``shadows'' $S_i+j$ (including itself) so that all together they partition $n$ elements.
Furthermore, for $1 \leq i \leq r$ we set
\begin{equation}
P_r^i(n):=\{(S_1,\, S_2, \dots,\, S_r) \in P_r(n): 0 \in S_i\}.
\end{equation}
We collect some simple, yet important facts about these objects in the following lemma. Recall that the sequence of \emph{$r$-step Fibonacci numbers} $\{F^{(r)}_n\}$ is defined as follows ($n\in \Z^+$)
\begin{equation}
F^{(r)}_{-n}=0,\,\,\, F^{(r)}_0=1, \,\,\, F^{(r)}_n=\sum_{i=n-r}^{n-1} F^{(r)}_i.
\end{equation}

\begin{lemma} \label{facts1}
\begin{enumerate}
\item $\{P^i_r(n): 1 \leq i \leq r \}$ is a partition of $P_r(n)$.
\item For $1\leq i \leq r$, $P_r(n-i)$ could be identified with $P^i_r(n)$ via the well-defined bijection
\begin{equation}\label{map}
(S_1,\, S_2, \dots,\, S_r)\mapsto (S_1+i,\, S_2+i,\dots,\{0\}\cup(S_i+i),\dots, \, S_r+i).
\end{equation}
\item For all $r>0$ and all $n\in \Z$ we have $|P_r(n)|=F_n^{(r)}$.
\end{enumerate}
\end{lemma}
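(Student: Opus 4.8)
The plan is to establish the three parts in the listed order, since part~(iii) will follow formally from parts~(i) and~(ii).

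For part~(i), suppose $n\in\Z^+$ and $(S_1,\dots,S_r)\in P_r(n)$. The element $0$ of $\{0,1,\dots,n-1\}$ lies in exactly one block $S_i+j$ of the partition, for some $1\le i\le r$ and $0\le j\le i-1$. Since $0\in S_i+j$ is equivalent to $-j\in S_i$ and $S_i\subseteq\{0,1,\dots,n-1\}$ forces $-j\ge 0$, i.e. $j\le 0$, we get $j=0$, so $0\in S_i$ for exactly one index $i$. This says precisely that the sets $P^i_r(n)$ are pairwise disjoint and cover $P_r(n)$, which is part~(i).

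For part~(ii), the main point, I would first check that the map \eqref{map} is well defined: given $(S_1,\dots,S_r)\in P_r(n-i)$, translating every coordinate by $i$ turns the partition of $\{0,\dots,n-i-1\}$ into a partition of $\{i,\dots,n-1\}$, and then, for each $0\le j\le i-1$, inserting the element $j$ into the translated block $(S_i+i)+j$ adjoins exactly the missing elements $\{0,1,\dots,i-1\}$, producing a partition of $\{0,\dots,n-1\}$; moreover $0$ has been placed into $\{0\}\cup(S_i+i)$, so the image lies in $P^i_r(n)$. Injectivity is immediate, since the construction is undone coordinatewise by $S_k=S_k'-i$ for $k\neq i$ and $S_i=(S_i'\setminus\{0\})-i$. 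Surjectivity is where the care is needed: given $(S_1',\dots,S_r')\in P^i_r(n)$, I would use $0\in S_i'$ to show that each $m\in\{0,1,\dots,i-1\}$ lies in the block $S_i'+m$, hence—by disjointness of the partition—in no other block; this forces $S_k'\subseteq\{i,\dots,n-1\}$ for $k\neq i$ and $S_i'\setminus\{0\}\subseteq\{i,\dots,n-1\}$, so deleting $\{0,\dots,i-1\}$ from all blocks and translating down by $i$ yields an element of $P_r(n-i)$ whose image under \eqref{map} is the given tuple. The degenerate ranges are checked directly: $P_r(n-i)$ and $P^i_r(n)$ are both empty when $n<i$ (a tuple with $0\in S_i$ needs $i-1\in S_i+(i-1)\subseteq\{0,\dots,n-1\}$), and both are singletons when $n=i$.

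Finally, part~(iii) follows by induction on $n$. The cases $n\le 0$ are immediate: $|P_r(0)|=1=F^{(r)}_0$ and $|P_r(-m)|=0=F^{(r)}_{-m}$ for $m>0$. For $n\ge 1$, parts~(i) and~(ii) give
\[
|P_r(n)|=\sum_{i=1}^{r}|P^i_r(n)|=\sum_{i=1}^{r}|P_r(n-i)|,
\]
and applying the inductive hypothesis rewrites the right-hand side as $\sum_{i=1}^{r}F^{(r)}_{n-i}=\sum_{m=n-r}^{n-1}F^{(r)}_m=F^{(r)}_n$, which is exactly the defining recursion for the $r$-step Fibonacci numbers. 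I expect the only genuine work to be the bookkeeping in part~(ii), and within it the surjectivity step that pins down where the smallest $i$ elements of the ground set are forced to sit.
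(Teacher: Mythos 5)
Your proof is correct and follows the same route as the paper, which simply leaves (i) and (ii) as routine verifications and obtains (iii) by combining them into the recursion $|P_r(n)|=\sum_{i=1}^r|P_r(n-i)|$ matching the $r$-step Fibonacci numbers. Your added detail (in particular the surjectivity argument pinning $\{0,\dots,i-1\}$ to the blocks $S_i'+m$, and the degenerate cases $n\le i$) fills in exactly the verifications the paper omits.
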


\begin{proof}
The proofs of (i) and (ii) consist of simple verifications that we leave to the reader, and statement (iii) follows from combining (i) and (ii).
\end{proof}

Let $S\subset \N$ be finite, and define the integer $w(S)$ by
\begin{equation}
w(S):=\sum_{i \in S} q^i.
\end{equation}
Note that $w(\emptyset)=0$.  To help simplify our formulas, we shall usually denote $(S_1,\dots, S_r)\in P_r(n)$ by ${\bf S}$. We fix the following notation for the rest of the paper
\[
|{\bf S}| :=\sum_{i=1}^r |S_i|,\quad
\bigcup {\bf S} :=\bigcup_{i=1}^r S_i,
\]
and
\[
{\bf S}+i :=(S_1+i,\dots,S_r+i)\in P_r(n+i), \, \textrm{ for } i\in \N.
\]
We collect some more facts that are relevant to our results.

\begin{lemma}
\begin{enumerate}
\item For ${\bf S} \in P_r(n)$, $\cup {\bf S}$ uniquely defines ${\bf S}$. Hence
\begin{equation}\label{rfib}
F_n^{(r)}=|P_r(n)|\leq 2^n.
\end{equation}
\item The $r$-tuple of sets $(S_1,\dots,S_r)$ is in $P_r(n)$ if and only if
\begin{equation}\label{qn-1}
\sum_{i=1}^r (q^i-1)w(S_i)=q^n-1.
\end{equation}
\end{enumerate}
\end{lemma}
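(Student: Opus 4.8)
I would prove the two parts by different means: (i) by induction on $n$ resting on the bijection~\eqref{map}, and (ii) by turning \eqref{qn-1} into an identity of polynomials in $\Z[x]$.

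For~(i), induct on $n$; the cases $n\le 0$ are immediate since $P_r(0)=\{\emptyset\}$ and $P_r(-n)=\emptyset$. Fix $n\ge 1$ and $\mathbf{S}=(S_1,\dots,S_r)\in P_r(n)$, and set $U=\bigcup\mathbf{S}$. The sets $S_1,\dots,S_r$ are the level-$0$ shadows, hence pairwise disjoint, so $0$ lies in exactly one of them, say $0\in S_{i_0}$. The heart of the matter is to recover $i_0$ from $U$ alone: for $1\le j\le i_0-1$ the integer $j$ is covered by the shadow $S_{i_0}+j$, hence by no level-$0$ shadow, so $j\notin U$; and using only that the shadows partition $\{0,\dots,n-1\}$ one checks conversely that $i_0\in U$ when $i_0\le n-1$, while $U=\{0\}$ precisely when $i_0=n$. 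Thus $i_0=\min\bigl((U\setminus\{0\})\cup\{n\}\bigr)$, which is determined by $U$. Now invoke~\eqref{map}: $\mathbf{S}$ is the image of a unique $\mathbf{S}'\in P_r(n-i_0)$, obtained by deleting $0$ from $S_{i_0}$ and subtracting $i_0$ from every remaining element, so $\bigcup\mathbf{S}'=(U\setminus\{0\})-i_0$ is again determined by $U$, and the inductive hypothesis concludes. Finally $|P_r(n)|\le 2^n$ because $\mathbf{S}\mapsto U$ embeds $P_r(n)$ into the power set of $\{0,\dots,n-1\}$, and $|P_r(n)|=F^{(r)}_n$ is Lemma~\ref{facts1}(iii).

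For the direction $\mathbf{S}\in P_r(n)\Rightarrow$ \eqref{qn-1} I would only use that $w$ is additive over disjoint unions and that $w(S_i+j)=q^j w(S_i)$: summing over the partition $\{S_i+j\}$ of $\{0,\dots,n-1\}$ gives $\frac{q^n-1}{q-1}=\sum_{i=1}^r\bigl(\sum_{j=0}^{i-1}q^j\bigr)w(S_i)=\sum_{i=1}^r\frac{q^i-1}{q-1}\,w(S_i)$, which is precisely \eqref{qn-1}. For the converse, suppose $(S_1,\dots,S_r)$ are finite subsets of $\N$ satisfying \eqref{qn-1}; then each $(q^i-1)w(S_i)\le q^n-1$, so $w(S_i)<q^n$ and $S_i\subseteq\{0,\dots,n-1\}$. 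With $\chi_i(x)=\sum_{s\in S_i}x^s$, put
\[
P(x)=\sum_{i=1}^r\chi_i(x)(x^i-1)-(x^n-1)=(x-1)\Bigl(\sum_{i=1}^r\chi_i(x)(1+x+\cdots+x^{i-1})-(1+x+\cdots+x^{n-1})\Bigr)\in\Z[x].
\]
By construction \eqref{qn-1} is exactly the assertion $P(q)=0$. The coefficient of $x^m$ in $P$ equals $\#\{i:m-i\in S_i\}-\#\{i:m\in S_i\}-[m=n]+[m=0]$, and once $S_i\subseteq\{0,\dots,n-1\}$ this lies in $[-r,r]$. Since a nonzero integer polynomial all of whose coefficients have absolute value $<q$ cannot vanish at $x=q$ (clear the lowest coefficient, factor $x-q$ out in $\Z[x]$, iterate), it follows---provided $r<q$---that $P\equiv 0$, i.e. $\sum_{i=1}^r\chi_i(x)(1+x+\cdots+x^{i-1})=1+x+\cdots+x^{n-1}$. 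Comparing coefficients of $x^m$ then says precisely that each element of $\{0,\dots,n-1\}$ lies in exactly one shadow and no larger integer lies in any, i.e. $\mathbf{S}\in P_r(n)$.

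The only mildly delicate point in~(i) is making the identification $i_0=\min\bigl((\bigcup\mathbf{S}\setminus\{0\})\cup\{n\}\bigr)$ fully rigorous; everything else there, and the forward direction of~(ii), is routine. The real obstacle is the final implication of~(ii): promoting the numerical identity $P(q)=0$ to the polynomial identity $P\equiv 0$. That forces one to bound the coefficients of $P$ against $q$, which is exactly where the size of $q$ relative to the rank $r$ must be exploited---absent such a bound, \eqref{qn-1} admits solutions that agree with a genuine shadowed partition only after specializing $x\mapsto q$.
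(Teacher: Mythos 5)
Your proof of part (i) is correct but takes a different route from the paper's. You recover the index $i_0$ with $0\in S_{i_0}$ as $\min\bigl((\cup\mathbf{S}\setminus\{0\})\cup\{n\}\bigr)$ and then induct through the bijection \eqref{map}; the paper instead reconstructs all of $\mathbf{S}$ from $\cup\mathbf{S}=\{s_1<\dots<s_m\}$ in one stroke, observing that the gaps satisfy $1\le s_{i+1}-s_i\le r$ and $1\le n-s_m\le r$, and that necessarily $s_i\in S_{s_{i+1}-s_i}$ for $i<m$ and $s_m\in S_{n-s_m}$. The paper's version is shorter and makes the inverse of $\mathbf{S}\mapsto\cup\mathbf{S}$ completely explicit, but your induction is sound (your identification of $i_0$ is precisely the first step of that reconstruction), and both yield \eqref{rfib} the same way. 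The forward implication of (ii) is the same computation as in the paper: divide by $q-1$ and sum $q^k$ over the shadows.

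The converse of (ii) is where the comparison is genuinely interesting. The paper's proof consists only of the forward computation and never addresses the converse, while you prove it only under the hypothesis $r<q$, by bounding the coefficients of $P(x)$ in $[-r,r]$ and noting that a nonzero integer polynomial with all coefficients of absolute value less than $q$ cannot vanish at $x=q$. Your caution is justified, and the restriction is not a defect of your method: without some such hypothesis the stated equivalence is false. For $q=r=2$, $n=4$, take $S_1=\{1,2\}$, $S_2=\{0,1\}$; then $(q-1)w(S_1)+(q^2-1)w(S_2)=6+9=15=q^4-1$, yet the shadows $S_1=\{1,2\}$, $S_2=\{0,1\}$, $S_2+1=\{1,2\}$ overlap and omit $3$, so $(S_1,S_2)\notin P_2(4)$ --- base-$q$ carrying lets overlapping shadows conspire to give the right numerical total. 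So the ``if and only if'' holds only under an extra hypothesis (for instance $r<q$, as in your argument, or assuming the shadows are pairwise disjoint, in which case uniqueness of base-$q$ digits finishes it for every $q$). Fortunately only the forward direction is ever used later (in Lemma~\ref{vbeta} and in the remark on weights), so nothing downstream is affected; on this point your treatment is more careful than the paper's own proof of \eqref{qn-1}.
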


\begin{proof}
To prove (i), write
\[
\cup {\bf S}=\{s_1, \, s_2,\dots, s_m\},
\]
with $s_i<s_{i+1}$. Note that the conditions in \eqref{prndefn} on $P_r(n)$ imply that $1\leq s_{i+1}-s_i \leq r$ and also $1\leq n-s_m \leq r$. It follows that we must have
\[
s_m \in S_{n-s_m},
\]
and, for $1\leq i\leq m-1$
\[
s_i \in S_{s_{i+1}-s_i}.
\]
Thus the sets $S_i$ are completely defined once we know $\cup{\bf S}$. It follows that the map \[\cup: P_r(n) \rightarrow \textrm { Subsets of }\{0,\dots, n-1\}\] is an injection, and \eqref{rfib} follows.  To prove \eqref{qn-1} divide both sides by $q-1$ to get
\begin{multline*}
w(S_1)+(q+1)w(S_2)+\dots + (q^{r-1}+q^{r-2}+\dots+q+1)w(S_r)\\
=q^{n-1}+q^{n-2}+\dots+q+1,
\end{multline*}
and the statement of (ii) follows.
\end{proof}

\section{Explicit formulas for lattice functions}
Let $\phi$ be a Drinfeld module of rank $r$ over $\C_\infty$. The corresponding exponential function $e_\phi$ on $\C_\infty$ satisfies
\begin{equation}\label{expFE}
e_\phi(Tz)=\phi_T(e_\phi(z)).
\end{equation}
It has the series expansion
\[
e_\phi(z)=\sum_{n=0}^\infty \alpha_n z^{q^n},
\]
with $\alpha_n=\alpha_n(\phi) \in \C_\infty$ and $\alpha_0=1$. In the next theorem we give an explicit formula for $\alpha_n$ in terms of the coefficients of $\phi$, and thus we give a proof of the existence of $e_\phi$ different from \eqref{lattice}. For notational convenience, for $(A_1,\dots, A_r)\in \C_\infty^r$ and ${\bf S}\in P_r(n)$ we write
\begin{equation}
{\bf A^S}:=\prod_{i=1}^r A_i^{w(S_i)}.
\end{equation}
Note that ${\bf A}^\emptyset=1$.

\begin{theorem}\label{alpha}
Let $\phi$ be a rank $r$ Drinfeld module given by
\[
\phi_T=\sum_{i=0}^r A_i \tau^i,\quad A_i\in \C_\infty.
\]
For $n\geq 0$ and for any $S\subset \{0,\, 1, \dots, n-1\}$ set
\begin{equation}
D_n(S):=\prod_{i \in S} [n-i]^{q^{i}}.
\end{equation}
If we set
\begin{equation}\label{alphaformula}
\alpha_n=\sum_{{\bf S}\in P_r(n)} \frac{{\bf A^S}}{D_n(\cup {\bf S})},
\end{equation}
then the series $\sum_{n=0}^\infty \alpha_nz^{q^n}$ converges on $\C_\infty$ and is the unique solution to \eqref{expFE} with $\alpha_0=1$ and thus $\exp_\phi(z) = \sum_{n=0}^\infty \alpha_nz^{q^n}$.
\end{theorem}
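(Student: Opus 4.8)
The plan is to verify that the coefficients $\alpha_n$ in \eqref{alphaformula} satisfy the recursion forced by the functional equation \eqref{expFE}, then to show the resulting series is entire, and finally to invoke uniqueness. First I would record the recursion: writing $e_\phi(z)=\sum_n\alpha_n z^{q^n}$ and substituting into $e_\phi(Tz)=\phi_T(e_\phi(z))$, using $\phi_T(X)=\sum_{i=0}^r A_iX^{q^i}$ with $A_0=T$ and raising the series to its $q^i$-th powers termwise (additivity of the $q$-power map), comparison of the coefficients of $z^{q^m}$ gives
\[
[m]\,\alpha_m=\sum_{i=1}^r A_i\,\alpha_{m-i}^{q^i},\qquad m\ge 1,
\]
while $m=0$ imposes no condition. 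Since $[m]\ne 0$ for $m\ge 1$, the value $\alpha_0=1$ together with this recursion determines all $\alpha_n$ uniquely, so it suffices to check that the $\alpha_n$ of \eqref{alphaformula} obey $\alpha_0=1$ (immediate, as $P_r(0)=\{\emptyset\}$ and $\mathbf{A}^{\emptyset}=D_0(\emptyset)=1$) and this recursion.

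The combinatorial heart is the recursion for $m\ge 1$, where I would use Lemma~\ref{facts1}: write $P_r(m)=\bigsqcup_{i=1}^r P_r^i(m)$ and transport the sum over each $P_r^i(m)$ to $P_r(m-i)$ along the bijection \eqref{map}. If $\mathbf{S}\in P_r^i(m)$ corresponds to $\mathbf{S}'\in P_r(m-i)$, then $\cup\mathbf{S}=\{0\}\cup(\cup\mathbf{S}'+i)$; from the definitions $w(S_k)=q^i w(S_k')$ for $k\ne i$ and $w(S_i)=1+q^i w(S_i')$, hence $\mathbf{A}^{\mathbf{S}}=A_i(\mathbf{A}^{\mathbf{S}'})^{q^i}$; and, pulling out the factor $[m-0]^{q^0}=[m]$ and reindexing the remaining product, $D_m(\cup\mathbf{S})=[m]\,D_{m-i}(\cup\mathbf{S}')^{q^i}$. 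Dividing, summing over $\mathbf{S}'\in P_r(m-i)$, and using additivity of $x\mapsto x^{q^i}$ to pull the power outside the sum yields $\sum_{\mathbf{S}\in P_r^i(m)}\mathbf{A}^{\mathbf{S}}/D_m(\cup\mathbf{S})=[m]^{-1}A_i\,\alpha_{m-i}^{q^i}$; summing over $i$ gives exactly the recursion.

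For convergence I would bound valuations directly in \eqref{alphaformula} via the ultrametric inequality. Since $v([k])=-q^k$ for $k\ge 1$, one has $v(D_n(S))=-|S|q^n$ for $S\subseteq\{0,\dots,n-1\}$, and since $S_1,\dots,S_r$ are pairwise disjoint (they are the parts of the partition with shift $j=0$), $v(D_n(\cup\mathbf{S}))=-|\mathbf{S}|q^n$. Also $\sum_i w(S_i)\le q^n$, and counting the elements of the partition gives $n=\sum_i i\,|S_i|\le r\,|\mathbf{S}|$, so $|\mathbf{S}|\ge n/r$. With $c:=\min(0,v(A_1),\dots,v(A_r))$ this gives $v(\mathbf{A}^{\mathbf{S}}/D_n(\cup\mathbf{S}))\ge(c+|\mathbf{S}|)q^n\ge(c+n/r)q^n$, whence $v(\alpha_n)/q^n\ge c+n/r\to\infty$. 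Thus $v(\alpha_n z^{q^n})\to\infty$ for every $z\in\C_\infty$, so the series converges everywhere and defines an entire $\F_q$-linear function; being a convergent solution of the recursion with $\alpha_0=1$, it coincides with the expansion \eqref{expseries} of $e_\phi$, so $\exp_\phi(z)=\sum_n\alpha_n z^{q^n}$. (Alternatively one argues directly that this entire function $E$ satisfies $E(Tz)=\phi_T(E(z))$, hence $E(az)=\phi_a(E(z))$ for all $a\in\A$, and equals $e_\phi$ by the standard uniqueness of the Drinfeld exponential.)

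I expect the combinatorial step — matching $\mathbf{A}^{\mathbf{S}}$ and $D_m(\cup\mathbf{S})$ under the bijection \eqref{map} and then applying additivity of the $q$-power map — to be the main point to get right, since it is where all the definitions of Section~2 must interlock; the convergence estimate, although it needs the two observations $|\mathbf{S}|\ge n/r$ and the disjointness of the $S_i$, is comparatively routine.
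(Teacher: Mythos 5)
Your proposal is correct and follows essentially the same route as the paper: the same recursion $[n]\alpha_n=\sum_{i=1}^r A_i\alpha_{n-i}^{q^i}$ from the functional equation, the same use of the partition $P_r(n)=\bigsqcup_i P_r^i(n)$ and the bijection \eqref{map} together with the identities ${\bf A^S}=A_i({\bf A}^{{\bf S}'})^{q^i}$ and $D_n(\cup{\bf S})=[n]\,D_{n-i}(\cup{\bf S}')^{q^i}$, and the same valuation estimates ($|{\bf S}|\geq n/r$ and a lower bound on $v({\bf A^S})$ via the minimal coefficient valuation) for entirety. The only difference is cosmetic: you verify that \eqref{alphaformula} satisfies the recursion while the paper runs the identical computation as an induction deriving $\alpha_n$, so no further comment is needed.
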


\begin{proof}
The functional equation
\[
e_\phi(Tz)=Te_\phi(z)+\sum_{i=1}^r A_i e_\phi(z)^{q^i}
\]
is equivalent to the  recursion
\begin{equation}\label{alpharecursion}
\alpha_nT^{q^n}=\sum_{i=0}^rA_i \alpha_{n-i}^{q^i}.
\end{equation}
For convenience, we can set $\alpha_n = 0$ for $n< 0$ so that \eqref{alpharecursion} holds for all $n \geq 0$.  We proceed by induction on $n$ to show that \eqref{alphaformula} is the unique solution to \eqref{alpharecursion} with $\alpha_0=1$. Since $D_n(\emptyset)=1$, formula \eqref{alphaformula} indeed gives $\alpha_0=1$. Substituting $A_0=T$,  the induction hypothesis gives
\[
\begin{split}
\alpha_n&=\frac{1}{[n]} \sum_{i=1}^r A_i \sum_{{\bf S}\in P_r(n-i)}\left(\frac{{\bf A^S}}{D_{n-i}(\cup {\bf S})}\right)^{q^i}\\
&=\frac{1}{[n]} \sum_{i=1}^r A_i \sum_{{\bf S}\in P_r(n-i)}\frac{{\bf A}^{{\bf S}+i}}{D_{n}(\cup {\bf S}+i)}\\
&=\sum_{i=1}^r \sum_{{\bf S}\in P^i_r(n)}\frac{{\bf A^S}}{D_{n}(\cup {\bf S})},
\end{split}
\]
where the last equality follows from Lemma \ref{facts1}(ii) and the fact that
\[
D_{n-i}(S)^{q^i}=D_n(S+i).
\]
Thus \eqref{alphaformula} is proved.

We could deduce the convergence of $\sum \alpha_nz^{q^n}$ on all of $\C_\infty$ by relying on the corresponding property of the lattice exponential function given by \eqref{lattice}. However, to emphasize that our approach suffices to develop important aspects of the theory, we use \eqref{alphaformula} to give a direct proof from first principles. Note that $v([n-i]^{q^i})=-q^n$, and thus $v(D_n(S))=-q^n|S|$. It follows that for ${\bf S} \in P_r(n)$ we have
\begin{equation}\label{vDn}
v({D_n(\cup{\bf S})})\leq -\frac{nq^n}{r}.
\end{equation}
Next, set $v_0=\min_{1\leq i\leq r}v(A_i)$. It is easy to see that for ${\bf S} \in P_r(n)$ we have
\begin{equation}\label{vAS}
v({\bf A^S})\geq w(\cup{\bf S})v_0.
\end{equation}
For ${\bf S}\in P_r(n)$ we have
\[
\frac{q^n-1}{q^r-1}\leq w(\cup{\bf S})\leq \frac{q^n-1}{q-1}.
\]
Together with \eqref{vDn} and \eqref{vAS} we get
\begin{equation}\label{lim}
v(\alpha_n z^{q^n})\geq \begin{cases}
q^n\left(\frac{n}{r}+\frac{v_0}{q-1}+v(z)\right), \, &\textrm{ if } v_0<0, \\
q^n\left(\frac{n}{r}+v(z)\right), \, &\textrm{ if } v_0\geq 0,
\end{cases}
\end{equation}
and it follows that
\[
\lim_{n\to +\infty}v(\alpha_nz^{q^n})= +\infty.
\]
Hence the series converges for all $z\in \C_\infty$.
\end{proof}

\begin{example}
We write a few concrete cases to clarify \eqref{alphaformula}. Let the superscript on $\alpha$ indicate the rank $r$ of the corresponding module. Then for $r=2$ we get, for instance
\begin{gather*}
\alpha_3^{(2)}=\frac{A_1^{q^2+q+1}}{[1]^{q^2}[2]^{q}[3]}+\frac{A_2^qA_1}{[2]^q[3]}
+\frac{A_1^{q^2}A_2}{[1]^{q^2}[3]}, \\
\alpha_4^{(2)}=\frac{A_1^{q^3+q^2+q+1}}{[1]^{q^3}[2]^{q^{2}}[3]^q[4]}
+\frac{A_2^qA_1^{q^3+1}}{[2]^{q^2}[3]^q[4]}+\frac{A_1^{q^3+q^2}A_2}{[1]^{q^3}[3]^q[4]}
+\frac{A_2^{q^2}A_1^{q+1}}{[2]^{q^2}[3]^q[4]}+\frac{A_2^{q^2+1}}{[2]^{q^2}[4]},
\end{gather*}
whereas for $r=3$ we get
\[
\alpha_3^{(3)}=\frac{A_1^{q^2+q+1}}{[1]^{q^2}[2]^{q^{1}}[3]}+\frac{A_2^qA_1}{[2]^q[3]}+\frac{A_1^{q^2}A_2}{[1]^{q^2}[3]}+\frac{A_3}{[3]},
\]
\begin{multline*}
\alpha_4^{(3)}=\frac{A_1^{q^3+q^2+q+1}}{[1]^{q^3}[2]^{q^{2}}[3]^q[4]}
+\frac{A_2^qA_1^{q^3+1}}{[2]^{q^2}[3]^q[4]}+\frac{A_1^{q^3+q^2}A_2}{[1]^{q^3}[3]^q[4]}
+\frac{A_2^{q^2}A_1^{q+1}}{[2]^{q^2}[3]^q[4]}\\
+\frac{A_2^{q^2+1}}{[2]^{q^2}[4]}
+\frac{A_3A_1^{q^3}}{[1]^{q^3}[4]}+\frac{A_3^qA_1}{[3]^{q}[4]}.
\end{multline*}
\end{example}

Next we study the function $\log_\phi$. From \eqref{expFE} we obtain the functional equation
\begin{equation}\label{logFE}
T\log_\phi(z)=\log_\phi(\phi_T(z)).
\end{equation}
The following proposition provides a concrete description of the coefficients of $\log_\phi$.
\begin{theorem} \label{beta}
Given a Drinfeld module $\phi$ of rank $r$, write
\[
\log_\phi(z)=\sum_{n=0}^\infty \beta_nz^{q^n}.
\]
For ${\bf S} \in P_r(n)$ set
\begin{equation}\label{LS}
L({\bf S}):=\prod_{j=1}^r \prod_{i \in S_j} (-[i+j]).
\end{equation}
Then
\begin{equation}\label{betaformula}
\beta_n=\sum_{{\bf S} \in P_r(n)}\frac{{\bf A^S}}{L({\bf S})}.
\end{equation}
\end{theorem}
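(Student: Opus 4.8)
The plan is to follow the proof of Theorem~\ref{alpha} as closely as possible: convert the functional equation \eqref{logFE} into a recursion for the coefficients $\beta_n$, and then prove \eqref{betaformula} by induction on $n$ using a suitable decomposition of $P_r(n)$. First I would expand \eqref{logFE}: writing $\phi_T(z)=\sum_{i=0}^r A_iz^{q^i}$ with $A_0=T$ and using that $(A_iz^{q^i})^{q^n}=A_i^{q^n}z^{q^{n+i}}$, comparison of the coefficients of $z^{q^n}$ on the two sides of $\log_\phi(\phi_T(z))=T\log_\phi(z)$ gives $T\beta_n=\sum_{i=0}^r\beta_{n-i}A_i^{q^{n-i}}$; moving the $i=0$ term $\beta_nT^{q^n}$ to the left and recalling that $[n]=T^{q^n}-T$, this becomes
\[
-[n]\,\beta_n=\sum_{i=1}^r\beta_{n-i}A_i^{q^{n-i}},
\]
where $\beta_m:=0$ for $m<0$ and $\beta_0=1$ (the normalization forced by $\alpha_0=1$, since $\log_\phi$ is the compositional inverse of $e_\phi$). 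The essential difference from the recursion \eqref{alpharecursion} for $\alpha_n$ is that here it is the coefficient $A_i$ of the inner map $\phi_T$, rather than the previous coefficient $\beta_{n-i}$, that gets raised to the $q^{n-i}$; for this reason the decomposition \eqref{map} used for Theorem~\ref{alpha} will not fit the present recursion.

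The remedy is to use the ``mirror image'' of \eqref{map}. For $1\le i\le r$ let $\Psi_i\colon P_r(n-i)\to P_r(n)$ be the map that adjoins the \emph{largest} element $n-i$ to the $i$-th set,
\[
\Psi_i(S_1,\dots,S_r):=(S_1,\dots,S_{i-1},\,S_i\cup\{n-i\},\,S_{i+1},\dots,S_r).
\]
Adjoining $n-i$ to $S_i$ replaces the shadows of $(S_1,\dots,S_r)$ by those shadows together with the block $\{(n-i)+j:0\le j\le i-1\}=\{n-i,\dots,n-1\}$, which is exactly the complement of $\{0,\dots,n-i-1\}$ in $\{0,\dots,n-1\}$; hence $\Psi_i$ is a well-defined injection, and for $n\ge1$ the images $\Psi_i(P_r(n-i))$ partition $P_r(n)$. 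Indeed, given ${\bf T}\in P_r(n)$, the argument establishing \eqref{rfib} puts the largest element $t$ of $\cup{\bf T}$ into $T_{n-t}$ with $1\le n-t\le r$, so deleting $t$ from $T_{n-t}$ recovers an ${\bf S}\in P_r(t)$ with $\Psi_{n-t}({\bf S})={\bf T}$; and ${\bf T}$ lies in at most one image, since $n-i\in T_i$ forces $n-1$ into the shadow $T_i+(i-1)$ and a partition cannot place $n-1$ in two distinct shadows. (Equivalently, $\Psi_i$ is \eqref{map} conjugated by the order-reversing involutions of $P_r(n-i)$ and $P_r(n)$ that send $(S_1,\dots,S_r)$ to $(\{(m-1)-s:s\in S_1\},\dots,\{(m-r)-s:s\in S_r\})$ for $m=n-i$ and $m=n$, so this decomposition is the reflection of Lemma~\ref{facts1}(i) and (ii).) Two identities then complete the setup: for ${\bf S}\in P_r(n-i)$ and ${\bf T}=\Psi_i({\bf S})$, since $n-i\notin S_i$ we have $w(S_i\cup\{n-i\})=w(S_i)+q^{n-i}$ and hence ${\bf A}^{\bf T}=A_i^{q^{n-i}}{\bf A^S}$, while the only new factor in \eqref{LS} is $-[(n-i)+i]=-[n]$, so that $L({\bf T})=-[n]\,L({\bf S})$.

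Combining these, I would substitute the inductive hypothesis $\beta_{n-i}=\sum_{{\bf S}\in P_r(n-i)}{\bf A^S}/L({\bf S})$ into the recursion and compute
\[
\beta_n=\frac{-1}{[n]}\sum_{i=1}^r A_i^{q^{n-i}}\!\!\sum_{{\bf S}\in P_r(n-i)}\!\frac{{\bf A^S}}{L({\bf S})}
=\sum_{i=1}^r\sum_{{\bf S}\in P_r(n-i)}\!\frac{{\bf A}^{\Psi_i({\bf S})}}{L(\Psi_i({\bf S}))}
=\sum_{{\bf T}\in P_r(n)}\frac{{\bf A}^{\bf T}}{L({\bf T})},
\]
which is \eqref{betaformula}; the base case $n=0$ holds since $P_r(0)=\{\emptyset\}$ with ${\bf A}^\emptyset=L(\emptyset)=1$. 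I expect the one genuine obstacle to be recognizing that the $\log$ recursion calls for this ``top-element'' decomposition of $P_r(n)$ rather than the ``bottom-element'' one of Lemma~\ref{facts1}(i) and (ii); once the correct bijection is identified, the coefficient product and the $L$-factor transform exactly as needed, and the induction runs just as for Theorem~\ref{alpha}.
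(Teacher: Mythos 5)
Your proposal is correct and follows essentially the same route as the paper's proof: the identical recursion $-[n]\beta_n=\sum_{i=1}^r A_i^{q^{n-i}}\beta_{n-i}$, the identical maps $\Psi_i$ adjoining the top element $n-i$ to $S_i$, and the same induction. The only difference is that you spell out explicitly the facts the paper asserts briefly, namely that the images $\Psi_i(P_r(n-i))$ partition $P_r(n)$ and that ${\bf A}^{\Psi_i({\bf S})}=A_i^{q^{n-i}}{\bf A^S}$ and $L(\Psi_i({\bf S}))=-[n]\,L({\bf S})$.
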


\begin{proof}
Since $L(\emptyset)=1$, we see that $\beta_0=1$, as expected. Now the functional equation \eqref{logFE} gives the recursion
\begin{equation}\label{tbetan}
T\beta_n=\sum_{i=0}^r \beta_{n-i}A_{i}^{q^{n-i}},
\end{equation}
where again we set $\beta_n=0$ for $n < 0$.  Applying $A_0=T$ and the induction hypothesis gives
\begin{equation}\label{betastep}
\begin{split}
-[n]\beta_n&=\sum_{i=1}^r A_i^{q^{n-i}}\sum_{{\bf S}\in P_r(n-i)}\frac{{\bf A^S}}{L({\bf S})}.\\
\end{split}
\end{equation}
Note that for $1\leq i\leq r$, the map $\Psi_i:P_r(n-i)\rightarrow P_r(n)$ given by
\[
\Psi_i(S_1,\, S_2,\dots, S_i,\dots, S_r)=(S_1, \, S_2, \dots, S_i\cup\{n-i\}, \dots, S_r),
\]
is a well-defined injection, and furthermore the collection $\{\Psi_i(P_r(n-i)): 1\leq i\leq r\}$ is a partition of $P_r(n)$. Also, note that for all $1\leq i\leq r$,
\[
L(\Psi_i{\bf S})= -[n]\cdot L({\bf S}).
\]
Thus from \eqref{betastep} we get
\[
\beta_n=\sum_{i=1}^r \sum_{{\bf S}\in \Psi_i(P_{r}(n-i))}\frac{{\bf A^S}}{L({\bf S})},
\]
and the result follows.
\end{proof}

\begin{remark}
When $\phi$ is the Carlitz module $\mathcal{C}$, we recover \eqref{Cexp} and \eqref{Clog} from \eqref{alphaformula} and \eqref{betaformula}, respectively.
\end{remark}

\begin{remark}
If we assign to each $A_n$ a ``weight" of $q^n-1$, and extend it to products in the usual way ($\textrm{wt}(AB)=\textrm{wt}(A)+\textrm{wt}(B)$), then by \eqref{qn-1} the total weight of any ${\bf A^S}$ appearing as a summand in the coefficient of $z^{q^n}$ in either $e_\phi$ or $\log_\phi$ is always $q^n-1$.
\end{remark}

\section{Convergence and range of $\log_\phi$ in rank two}
Let $\phi$ be a rank $2$ Drinfeld module given by
\begin{equation}\label{rank2}
\phi_T=T+A\tau+B\tau^2.
\end{equation}
The \emph{$\jmath$-invariant} of $\phi$ is defined by
\begin{equation}\label{jinv}
\jmath(\phi):=\frac{A^{q+1}}{B}.
\end{equation}
In this section we  study the convergence properties of the series defining $\log_\phi$. We start with determining the valuation of its coefficients.

\begin{lemma}\label{vbeta}
Let $\phi$ be as in \eqref{rank2} and write
\[
\log_\phi(z)=\sum_{n=0}^\infty \beta_nz^{q^n}.
\]
Then for $n\in \N$, $v(\beta_n)$ is given by the following formula.
\begin{equation}\label{betaval}
v(\beta_n)=\begin{cases}
{\displaystyle \frac{q^n-1}{q-1}(v(A)+q)},& \text{if $v(\jmath)<-q$,}\\[10pt]
{\displaystyle \frac{q^n-1}{q^2-1}(v(B)+q^2)},&  \text{if $v(\jmath)>-q$ and $n$ is even,}\\[10pt]
{\displaystyle \frac{q^n-1}{q^2-1}(v(B)+q^2)+\frac{v(\jmath)+q}{q+1}},& \text{if $v(\jmath)>-q$ and  $n$ is odd}.
\end{cases}
\end{equation}
In the case where $v(\jmath)=-q$ we have
\begin{equation}\label{betainequality}
v(\beta_n)\geq \frac{q^n-1}{q-1} (v(A)+q)=\frac{q^n-1}{q^2-1}(v(B)+q^2),
\end{equation}
with equality holding infinitely often.
\end{lemma}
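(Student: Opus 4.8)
The plan is to compute $v(\beta_n)$ directly from the explicit formula \eqref{betaformula}, by estimating the valuation of each summand $\mathbf{A}^{\mathbf S}/L(\mathbf S)$ over $\mathbf{S}\in P_2(n)$ and identifying which term (or terms) achieves the minimum. First I would record the elementary valuation data: $v([m])=-q^m$ for $m\geq 1$, so $v(L(\mathbf S))=\sum_{j=1}^2\sum_{i\in S_j}(-q^{i+j})=-\bigl(q\,w(S_1)+q^2\,w(S_2)\bigr)$, while $v(\mathbf A^{\mathbf S})=v(A)w(S_1)+v(B)w(S_2)$ since $A_1=A$, $A_2=B$. Hence for each $\mathbf{S}=(S_1,S_2)$,
\[
v\!\left(\frac{\mathbf A^{\mathbf S}}{L(\mathbf S)}\right)=\bigl(v(A)+q\bigr)w(S_1)+\bigl(v(B)+q^2\bigr)w(S_2).
\]
The constraint \eqref{qn-1} in rank $2$ reads $(q-1)w(S_1)+(q^2-1)w(S_2)=q^n-1$, i.e. $w(S_1)+(q+1)w(S_2)=\tfrac{q^n-1}{q-1}$. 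Writing $s=w(S_2)$, the exponent above becomes $\tfrac{q^n-1}{q-1}\bigl(v(A)+q\bigr)+s\bigl[(q+1)(v(B)+q^2)-(q+1)(v(A)+q)\bigr]$... more cleanly, substituting $w(S_1)=\tfrac{q^n-1}{q-1}-(q+1)s$ gives the summand valuation as an affine function of $s$ whose slope is $-(q+1)(v(A)+q)+(v(B)+q^2) = -\tfrac{v(\jmath)+q}{1}\cdot(\text{positive})$; precisely the slope has the same sign as $-(v(\jmath)+q)$ because $v(\jmath)=(q+1)v(A)-v(B)$, so $v(B)+q^2-(q+1)(v(A)+q)=-(v(\jmath)+q)$.

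Next I would turn this into the trichotomy. Since the summand valuation is affine in $s=w(S_2)$ with slope $-(v(\jmath)+q)$, the minimum over $P_2(n)$ is attained at the extreme admissible value of $s$. If $v(\jmath)<-q$ the slope is positive, so we want $s$ as small as possible: $s=0$, forcing $S_2=\emptyset$ and $S_1=\{0,1,\dots,n-1\}$, which is a legitimate element of $P_2(n)$ (its shadows $S_1, S_1+1$ partition $\{0,\dots,n\}$ — wait, one must check $S_1\subset\{0,\dots,n-1\}$ and the shadowed-partition condition; in fact $S_2=\emptyset$, $S_1=\{0,\dots,n-1\}$ satisfies \eqref{qn-1} since $(q-1)\cdot\tfrac{q^n-1}{q-1}=q^n-1$, hence lies in $P_2(n)$ by Lemma on \eqref{qn-1}). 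This single term has valuation $\tfrac{q^n-1}{q-1}(v(A)+q)$, and because the slope is strictly positive every other term is strictly larger, so there is no cancellation and $v(\beta_n)$ equals this value, giving the first case. If $v(\jmath)>-q$ the slope is negative, so we maximize $s$; the largest $s=w(S_2)$ compatible with $S_2\subset\{0,\dots,n-1\}$ and the partition condition is $s=\tfrac{q^n-1}{q^2-1}$ when $n$ is even (take $S_2=\{0,2,4,\dots,n-2\}$, $S_1=\emptyset$), giving valuation $\tfrac{q^n-1}{q^2-1}(v(B)+q^2)$; when $n$ is odd the purely-$S_2$ choice is impossible (parity obstruction: $w(S_1)+(q+1)w(S_2)=\tfrac{q^n-1}{q-1}$ with $w(S_2)$ an integer forces $w(S_1)\equiv \tfrac{q^n-1}{q-1}\pmod{q+1}$, and one checks $\tfrac{q^n-1}{q-1}\equiv 1\pmod{q+1}$ for $n$ odd, $\equiv 0$ for $n$ even), so the optimal $\mathbf S$ has $w(S_1)=1$, i.e. $S_1=\{0\}$ and $S_2=\{1,3,\dots,n-2\}$ with $w(S_2)=\tfrac{q^n-q}{q^2-1}$; plugging in gives the stated odd-case value after simplification. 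In each subcase I must verify the minimizing $\mathbf S$ is unique, so the leading term does not cancel.

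Finally, the borderline $v(\jmath)=-q$: here the slope is $0$, so every summand $\mathbf A^{\mathbf S}/L(\mathbf S)$ has the same valuation, namely $\tfrac{q^n-1}{q-1}(v(A)+q)$, and the two displayed expressions in \eqref{betainequality} agree by the identity $(q+1)(v(A)+q)=v(B)+q^2$ which is exactly $v(\jmath)=-q$. Thus $v(\beta_n)\geq$ that common value, with equality iff the sum of the unit parts of the summands is nonzero; since $|P_2(n)|=F_n^{(2)}$ grows and the reductions mod the maximal ideal are governed by the $[m]$'s whose leading behavior is explicit, one expects generic non-vanishing. I would prove ``equality holding infinitely often'' by a cleaner route: by the first two cases, $v(\beta_n)$ as a function on the region $v(\jmath)\neq -q$ is continuous/locally constant in the relevant sense, or more concretely, deform — but since we are at a fixed $\phi$, instead argue directly. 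Note that when $n$ is even, the term from $\mathbf S=(\emptyset,\{0,2,\dots,n-2\})$ contributes a well-understood leading coefficient (a product of leading coefficients of $-[i+j]$, each of which is $-1$ times a monic power of $T$), and I would show that among all $\mathbf S\in P_2(n)$ with that minimal valuation the sum of leading coefficients is nonzero for infinitely many $n$ — most transparently for $n$ even by isolating a dominant contribution, or by a counting/parity argument mod $p$. The main obstacle is exactly this last point: ruling out systematic cancellation of the top-degree coefficients when $v(\jmath)=-q$. I expect to handle it by extracting, for even $n$, the explicit leading term of the distinguished summand and bounding the number of competing summands, showing their leading coefficients cannot all cancel for $n$ in a suitable infinite arithmetic progression (e.g. $n$ even, using that the relevant sum is a fixed nonzero element of $\F_q$ plus lower-order corrections). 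Everything else is the affine-optimization bookkeeping sketched above.
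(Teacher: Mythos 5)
Your treatment of the three main cases is essentially the paper's own proof: you compute $v\bigl(\mathbf{A}^{\mathbf S}/L(\mathbf S)\bigr)=(v(A)+q)w(S_1)+(v(B)+q^2)w(S_2)$, use $(q-1)w(S_1)+(q^2-1)w(S_2)=q^n-1$ to make this affine in $w(S_2)$ with slope $-(v(\jmath)+q)$, identify the unique extremal $\mathbf S$ (with the correct parity distinction for $n$ odd versus even), and conclude by the ultrametric inequality. That part is correct and needs only the routine uniqueness check you already flag (which holds because $w$ determines the set via base-$q$ digits).

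The genuine gap is the borderline case $v(\jmath)=-q$, and specifically the claim that equality in \eqref{betainequality} holds infinitely often — exactly the point you yourself call the main obstacle. Your proposed route (show the sum of leading coefficients of the minimal-valuation summands is nonzero for infinitely many $n$, e.g.\ because it is ``a fixed nonzero element of $\F_q$ plus lower-order corrections'') does not work as stated: after normalizing, that sum is a polynomial expression in the residues of $A$ and $B$ (something like $\sum_{\mathbf S\in P_2(n)}(-1)^{|\mathbf S|}a^{w(S_1)}b^{w(S_2)}$ with $a,b$ the unit parts), which depends on $\phi$ and can genuinely vanish for particular $n$; there is no fixed nonzero constant, and bounding the number of competing summands (there are $F_n^{(2)}$ of them) gives no control on cancellation. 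The paper avoids analyzing cancellation among shadowed partitions altogether: it uses the recursion \eqref{tbetan}, which in rank $2$ reads $-[n]\beta_n=A^{q^{n-1}}\beta_{n-1}+B^{q^{n-2}}\beta_{n-2}$. When $v(\jmath)=-q$ the two right-hand terms have the same ``expected'' valuation $\frac{q^n-1}{q-1}(v(A)+q)-q^n$ whenever $\beta_{n-1},\beta_{n-2}$ attain the bound, so by the ultrametric: if $v(\beta_{n-1})$ attains the bound but $v(\beta_n)$ does not, then in the recursions for $\beta_{n+1}$ and $\beta_{n+2}$ one of the two terms has strictly larger valuation than the other, forcing $v(\beta_{n+1})$ and $v(\beta_{n+2})$ to attain the bound exactly. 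Together with the direct check at $n=0,1$, equality therefore holds at least two-thirds of the time, hence infinitely often. You should replace your leading-coefficient argument with this (or an equivalent) propagation argument; without it the last assertion of the lemma remains unproved.
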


\begin{proof}
From \eqref{betaformula} we have
\[
\beta_n=\sum_{(S_1, S_2)\in P_2(n)} \frac{A^{w(S_1)}B^{w(S_2)}}{L(S_1,S_2)},
\]
where
\[
L(S_1,S_2)=\prod_{i\in S_1}(-[i+1]) \prod_{i\in S_2} (-[i+2]).
\]
It is easy to  see that
\[
v\left(\frac{A^{w(S_1)}B^{w(S_2)}}{L(S_1,S_2)}\right)=w(S_1)v(A)+w(S_2)v(B)+qw(S_1)+q^2w(S_2).
\]
In addition, by \eqref{qn-1} we have $(q-1)w(S_1)+(q^2-1)w(S_2)=q^n-1$, hence
\[
w(S_1)=\frac{q^n-1}{q-1}-(q+1)w(S_2),
\]
and consequently
\begin{equation}\label{betamainval}
v\left(\frac{A^{w(S_1)}B^{w(S_2)}}{L(S_1,S_2)}\right)=\frac{q^n-1}{q-1}(v(A)+q)-w(S_2)(v(\jmath)+q).
\end{equation}
Thus our analysis naturally breaks into the following three cases.

\textbf{Case 1: $v(\jmath)>-q$.} In this case, we see that $v\left(\frac{A^{w(S_1)}B^{w(S_2)}}{L(S_1,S_2)}\right)$ is a strictly decreasing function of $w(S_2)$, and hence attains its minimal value when $w(S_2)$ is maximal. It is easy to see that
\begin{equation}
\max_{(S_1,S_2) \in P_r(n)}(w(S_2))= \begin{cases}
{\displaystyle \frac{q^n-1}{q^2-1}},  &\textnormal{if $n$ is even,}\\[10pt]
{\displaystyle \frac{q^n-q}{q^2-1}},  &\textrm{if $n$ is odd. }
\end{cases}
\end{equation}
(Corresponding to $S_1=\emptyset$ for $n$ even and $S_1=\{0\}$ for odd $n$). The ultrametric property implies
\begin{equation}
v(\beta_n)=\begin{cases}
{\displaystyle \frac{q^n-1}{q-1}(v(A)+q)-\frac{q^n-1}{q^2-1}(v(\jmath)+q)}, &\textnormal{if $n$ is even,}\\[10pt]
{\displaystyle \frac{q^n-1}{q-1}(v(A)+q)-\frac{q^n-q}{q^2-1}(v(\jmath)+q)}, &\textnormal{if $n$ is odd,}
\end{cases}
\end{equation}
and the corresponding part of \eqref{betaval} follows.

\textbf{Case 2: $v(\jmath)<-q$.} From \eqref{betamainval} we see that $v\left(\frac{A^{w(S_1)}B^{w(S_2)}}{L(S_1,S_2)}\right)$ is strictly increasing in $w(S_2)$. Thus the minimal valuation is attained when $S_2=\emptyset$, which implies the first line of \eqref{betaval}.

\textbf{Case 3: $v(\jmath)=-q$.} In this case we see that $v\left(\frac{A^{w(S_1)}B^{w(S_2)}}{L(S_1,S_2)}\right)$ is always equal to
\[
\frac{q^n-1}{q-1} (v(A)+q)=\frac{q^n-1}{q^2-1}(v(B)+q^2),
\]
and hence $v(\beta_n)\geq \frac{q^n-1}{q-1}(v(A)+q)$. It is easy to see by direct calculation that we have equality for $n=0,1$. The recurrence formula
\[
-[n]\beta_n=A^{q^{n-1}}\beta_{n-1}+B^{q^{n-2}}\beta_{n-2}
\]
implies that
\[
v(\beta_n)-q^n\geq \inf\left(q^{n-1}v(A)+v(\beta_{n-1}),q^{n-2}v(B)+v(\beta_{n-2})\right).\]
Since $v(\jmath)=-q$, an easy computation shows that
\begin{multline*}
q^{n-1}v(A)+\frac{q^{n-1}-1}{q-1}(v(A)+q)\\ =q^{n-2}v(B)+\frac{q^{n-2}-1}{q-1}(v(A)+q)
=\frac{q^n-1}{q-1}(v(A)+q)-q^n.
\end{multline*}
Thus if $v(\beta_n)>\frac{q^n-1}{q-1}(v(A)+q)$, while $v(\beta_{n-1})=\frac{q^{n-1}-1}{q-1}(v(A)+q)$, then we must have $v(\beta_{n+1})=\frac{q^{n+1}-1}{q-1}(v(A)+q)$ and also $v(\beta_{n+2})=\frac{q^{n+2}-1}{q-1}(v(A)+q)$. Thus equality in \eqref{betainequality} actually occurs at least two thirds of the time, and the result follows.
\end{proof}

\begin{corollary}\label{logorder}
Set
\begin{equation}
\begin{split}
\rho_B&:=-\frac{q^2+v(B)}{q^2-1},\\
\rho_A&:=-\frac{q+v(A)}{q-1}.
\end{split}
\end{equation}
If $v(\jmath)\geq-q$ then the series $\sum \beta_iz^{q^i}$ converges exactly for $z\in \C_\infty$ with   $v(z)>\rho_B$, and if $v(\jmath)\leq-q$ then it converges exactly for  $v(z)>\rho_A$.
\end{corollary}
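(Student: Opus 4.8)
The plan is to reduce everything to Lemma~\ref{vbeta} via the standard criterion for convergence of series in the complete non-archimedean field $\C_\infty$: a series $\sum_{n\geq 0} c_n$ converges if and only if $v(c_n)\to +\infty$. Applying this with $c_n=\beta_n z^{q^n}$, convergence of $\sum \beta_n z^{q^n}$ at a point $z$ is equivalent to $v(\beta_n)+q^n v(z)\to +\infty$, so it suffices to substitute the values of $v(\beta_n)$ supplied by Lemma~\ref{vbeta} and determine for which $v(z)$ this limit holds.

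First I would handle $v(\jmath)>-q$. Writing $\varepsilon_n:=0$ for $n$ even and $\varepsilon_n:=\frac{v(\jmath)+q}{q+1}$ for $n$ odd (a bounded quantity), the last two lines of \eqref{betaval} give $v(\beta_n)+q^n v(z)=q^n\bigl(v(z)-\rho_B\bigr)-\frac{v(B)+q^2}{q^2-1}+\varepsilon_n$. Since $q^n\to\infty$, this tends to $+\infty$ exactly when $v(z)>\rho_B$; for $v(z)=\rho_B$ it reduces to the bounded sequence $-\frac{v(B)+q^2}{q^2-1}+\varepsilon_n$, and for $v(z)<\rho_B$ it tends to $-\infty$. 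Hence the series converges precisely for $v(z)>\rho_B$. The case $v(\jmath)<-q$ is identical in form: the first line of \eqref{betaval} gives $v(\beta_n)+q^n v(z)=q^n\bigl(v(z)-\rho_A\bigr)-\frac{v(A)+q}{q-1}$, which tends to $+\infty$ iff $v(z)>\rho_A$ and otherwise does not, so here the series converges precisely for $v(z)>\rho_A$.

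It remains to treat the borderline $v(\jmath)=-q$. Here I would first record the algebraic identity $\rho_A=\rho_B$: from $v(\jmath)=(q+1)v(A)-v(B)=-q$ one gets $q^2+v(B)=(q+1)(q+v(A))$, and dividing by $q^2-1=(q-1)(q+1)$ gives $\rho_B=-\frac{q+v(A)}{q-1}=\rho_A$; thus both clauses of the corollary assert convergence exactly for $v(z)>\rho_A$. For $v(z)>\rho_A$ the lower bound $v(\beta_n)\geq \frac{q^n-1}{q-1}(v(A)+q)$ of \eqref{betainequality} yields $v(\beta_n)+q^nv(z)\geq q^n(v(z)-\rho_A)-\frac{v(A)+q}{q-1}\to+\infty$, so the series converges. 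For $v(z)\leq\rho_A$ I would invoke the other half of Lemma~\ref{vbeta}: equality $v(\beta_n)=\frac{q^n-1}{q-1}(v(A)+q)$ holds for infinitely many $n$, and along such a subsequence $v(\beta_n)+q^nv(z)=q^n(v(z)-\rho_A)-\frac{v(A)+q}{q-1}$ stays $\leq-\frac{v(A)+q}{q-1}$, so the general term does not tend to $+\infty$ and the series diverges.

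There is no serious obstacle once Lemma~\ref{vbeta} is in hand; the only point requiring genuine care is the borderline case $v(\jmath)=-q$, where the sharpness of the convergence region at and below $v(z)=\rho_A$ must be extracted from the ``equality holding infinitely often'' clause rather than from a closed formula for $v(\beta_n)$, and where one should not forget to check that $\rho_A$ and $\rho_B$ actually coincide so that the two stated bounds are consistent.
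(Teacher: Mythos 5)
Your proposal is correct and follows essentially the same route as the paper: apply the non-archimedean convergence criterion to $v(\beta_n z^{q^n})$ using the valuations from Lemma~\ref{vbeta}, with the borderline case $v(\jmath)=-q$ handled via the ``equality infinitely often'' clause (the paper's \eqref{orderlog} and \eqref{manyn} are exactly your case analysis, and your explicit check that $\rho_A=\rho_B$ there is a nice detail the paper leaves implicit).
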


\begin{proof}
We know that the series converges if and only if $\lim_{n\to +\infty}v(\beta_nz^{q^n})=+\infty$. From Lemma~\ref{vbeta} we have
\begin{equation}\label{orderlog}
v(\beta_nz^{q^n})= \begin{cases}
{ (q^n-1)\left(\frac{v(A)+q}{q-1}+v(z)\right)+v(z)}, &\textnormal{if $v(\jmath)<-q$,}\\[10pt]
{ (q^n-1)\left(\frac{v(B)+q^2}{q^2-1}+v(z)\right)+v(z)}, &\parbox{1truein}{if $v(\jmath)>-q$ \\ and $n$ is even,}\\[10pt]
{(q^n-1)\left(\frac{v(B)+q^2}{q^2-1}+v(z)\right)+v(z)+\frac{v(\jmath)+q}{q+1}}, &\parbox{1truein}{if $v(\jmath)>-q$ \\ and $n$ is odd.}\\
\end{cases}
\end{equation}
When $v(\jmath)=-q$ then we have
\begin{equation}\label{manyn}
v(\beta_n z^{q^n})\geq(q^n-1)\left(\frac{v(B)+q^2}{q^2-1}+v(z)\right)+v(z),
\end{equation}
with equality holding for infinitely many values of $n$. The result follows at once.
\end{proof}

\begin{corollary}\label{logrange}
If the series for $\log_\phi$ converges for $z\in \C_\infty$ then we must have
\begin{equation}
v(\log_\phi(z))=v(z).
\end{equation}
\end{corollary}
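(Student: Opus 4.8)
The plan is to observe that the term of index $n=0$ in $\log_\phi(z)=\sum_{n\geq 0}\beta_n z^{q^n}$ is simply $z$, since $\beta_0=1$, and then to show that whenever the series converges every remaining term has valuation strictly larger than $v(z)$. The non-archimedean ``strongest wins'' property of $v$ then forces $v(\log_\phi(z))=v(z)$.

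First I would rephrase the valuations recorded in Corollary~\ref{logorder} in terms of $\rho_A$ and $\rho_B$. Directly from the definitions,
\[
\frac{v(A)+q}{q-1}=-\rho_A,\qquad \frac{v(B)+q^2}{q^2-1}=-\rho_B,
\]
so the three lines of \eqref{orderlog} become, respectively,
\[
v(\beta_n z^{q^n})=(q^n-1)\bigl(v(z)-\rho_A\bigr)+v(z),\qquad
v(\beta_n z^{q^n})=(q^n-1)\bigl(v(z)-\rho_B\bigr)+v(z),
\]
with an extra summand $\frac{v(\jmath)+q}{q+1}\geq 0$ added in the odd case with $v(\jmath)>-q$, while \eqref{manyn} gives $v(\beta_n z^{q^n})\geq (q^n-1)\bigl(v(z)-\rho_B\bigr)+v(z)$ when $v(\jmath)=-q$.

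Now assume the series converges at $z$. By Corollary~\ref{logorder} we have $v(z)>\rho_B$ when $v(\jmath)\geq -q$ and $v(z)>\rho_A$ when $v(\jmath)\leq -q$; in every case the relevant difference $v(z)-\rho$ is strictly positive. Since $q^n-1>0$ for $n\geq 1$ and the possible extra term is nonnegative, this yields $v(\beta_n z^{q^n})>v(z)$ for all $n\geq 1$, whereas the term $n=0$ has valuation exactly $v(z)$. Because convergence forces $v(\beta_n z^{q^n})\to+\infty$, the minimum over $n\geq 1$ of these valuations is attained and hence strictly exceeds $v(z)$, so $v\bigl(\log_\phi(z)-z\bigr)>v(z)$ and therefore $v(\log_\phi(z))=v(z)$.

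There is no serious obstacle here; the argument is essentially bookkeeping over the cases of Lemma~\ref{vbeta}. The one point deserving a moment's care is the boundary value $v(\jmath)=-q$: there one must note that $(q+1)v(A)-v(B)=-q$ implies $\rho_A=\rho_B$, so that the single convergence condition $v(z)>\rho_B$ indeed applies, and that the inequality \eqref{manyn} still furnishes a lower bound strictly larger than $v(z)$ for every $n\geq 1$.
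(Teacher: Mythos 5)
Your proposal is correct and follows the paper's own argument: both use the valuation formulas \eqref{orderlog} and \eqref{manyn} together with the convergence range from Corollary~\ref{logorder} to see that $v(\beta_n z^{q^n})>v(z)$ for all $n\geq 1$, and then conclude by the ultrametric property. Your version merely spells out the bookkeeping (including the boundary case $v(\jmath)=-q$) that the paper leaves as "easy to see."
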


\begin{proof}
From \eqref{orderlog} and \eqref{manyn}, it is easy to see that in the case of convergence we must have
\[
v(z)<v(\beta_nz^{q^n}) \textrm{ for all } n\geq 1,
\]
and the result follows by the ultrametric property of $v$.
\end{proof}

\begin{remark} Note that $\rho_B-\rho_A=\frac{v(\jmath)+q}{q^2-1}$. If we set
\[
\rho_\phi:=\max(\rho_A,\rho_B),
\]
then we can rephrase Corollary \ref{logorder} by saying that the series of $\log_\phi$ converges at $z\in \C_\infty$ if and only if $v(z)>\rho_\phi$.
\end{remark}

\section{Computing the periods of rank two Drinfeld modules}
Let $\phi$ be a Drinfeld module given by
\[
\phi_T=T+A\tau+B\tau^2,
\]
and let $\La_\phi$ be the corresponding lattice.
We can describe $\La_\phi$ as the unique lattice for which
\begin{equation}
e_{\La_\phi}=e_\phi.
\end{equation}
In other words, $\La_\phi$ is the set of zeros of $e_\phi$.
Our goal in this section is to outline a procedure for obtaining \emph{periods} of $\phi$, (i.e. elements of the lattice $\La_\phi$) in terms of the coefficients $A$ and $B$.
We start with an easy lemma on the values of $e_\phi$ at the $T$-division points of $\phi$.
\begin{lemma}\label{Tdivision}
For every $\la \in \La_\phi$, $\delta_\la:=e_\phi\left(\frac{\la}{T}\right)$ is a root of the polynomial
\[
Bx^{q^2}+Ax^{q}+Tx=0.
\]
\end{lemma}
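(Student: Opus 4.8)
The plan is to evaluate the exponential functional equation \eqref{expFE} at the single point $z=\lambda/T$. First I would recall that, by the identification $e_{\La_\phi}=e_\phi$ together with the fact (around \eqref{lattice} and \eqref{FE}) that the kernel of the lattice exponential $e_\La$ is exactly $\La$, the lattice $\La_\phi$ is precisely the set of zeros of $e_\phi$; in particular $e_\phi(\lambda)=0$ for every $\lambda\in\La_\phi$.

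Next, for the rank $2$ module $\phi_T=T+A\tau+B\tau^2$, equation \eqref{expFE} reads
\[
e_\phi(Tz)=\phi_T(e_\phi(z))=T\,e_\phi(z)+A\,e_\phi(z)^{q}+B\,e_\phi(z)^{q^2}.
\]
Substituting $z=\lambda/T$ and using $T\cdot(\lambda/T)=\lambda$ gives
\[
0=e_\phi(\lambda)=T\delta_\lambda+A\delta_\lambda^{q}+B\delta_\lambda^{q^2},
\]
which is exactly the assertion that $\delta_\lambda$ is a root of $Bx^{q^2}+Ax^{q}+Tx$.

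There is essentially no obstacle in the argument; the only point that needs to be invoked rather than computed is the identification of $\La_\phi$ with $\ker e_\phi$, which is standard and recalled in the excerpt. One may also phrase the conclusion conceptually: since $Bx^{q^2}+Ax^q+Tx=\phi_T(x)$ as an $\F_q$-linear polynomial, the lemma simply says that $\delta_\lambda$ is a $T$-torsion point of $\phi$, which is immediate from $\phi_T(\delta_\lambda)=e_\phi(T\cdot\lambda/T)=e_\phi(\lambda)=0$.
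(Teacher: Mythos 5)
Your proposal is correct and follows essentially the same route as the paper: both evaluate the functional equation $e_\phi(Tz)=\phi_T(e_\phi(z))$ at $z=\la/T$ and use that $e_\phi(\la)=0$ because $\La_\phi=\ker e_\phi$. Nothing is missing; the concluding remark that $\delta_\la$ is a $T$-torsion point is a nice conceptual restatement but adds nothing beyond the paper's argument.
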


\begin{proof}
The function $e_\phi$ satisfies the functional equation $e_\phi(Tz)=\phi_T(e_\phi(z))$.
Hence
\[
0=e_\phi\left(T\cdot \frac{\la}{T}\right)=T\delta_\la+A\delta_\la^q+B\delta_\la^{q^2},
\]
and the result follows.
\end{proof}
For the remainder of the paper, we let
\begin{equation}
f_\phi(x)=Bx^{q^2}+Ax^q+Tx.
\end{equation}
Also set
\begin{align*}
V_\phi&:=\{\delta \in \C_\infty: f_\phi(\delta)=0\},\\
V_\phi^*&:=\{\delta \in \C_\infty: B\delta^{q^2-1}+A\delta^{q-1}+T=0\}.
\end{align*}
As $V_\phi$ is the $T$-torsion submodule on $\phi$ it follows that $V_\phi$ is a $2$-dimensional vector space over~$\F_q$, and $V_\phi^*$ is its set of nonzero elements. The following lemma gives a complete description of the possible valuations on $V^*_\phi$.

\begin{lemma}\label{vdelta}
Let $\phi$ be a rank $2$ Drinfeld  module given by \eqref{rank2}, and let $\jmath$ be its $\jmath$-invariant as in \eqref{jinv}. Exactly one of the following cases hold.
\begin{enumerate}
\item  All the elements of $V_\phi^*$ have the same valuation given by
\begin{equation}\label{allsame}
v(\delta)=\frac{-(1+v(B))}{q^2-1} \textrm{ for all } \delta \in V_\phi^*.
\end{equation}
This case happens  if and only if $v(\jmath)\geq -q$.
\medskip
\item There is  an element $\eta\in V_\phi^*$ such that all elements of  $\F_q^* \eta$ have strictly larger valuation than the rest of $V_\phi^*$ if an only if $v(\jmath)<-q$. In this case we have
\begin{equation}\label{bigone}
v(\eta)=\frac{-(1+v(A))}{q-1} \textrm{ and }  v(\delta)=\frac{v(A)-v(B)}{q^2-q} \textrm{ for all } \delta \in V_\phi\setminus \F_q\eta.
\end{equation}
\end{enumerate}
\end{lemma}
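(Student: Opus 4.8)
\emph{Proof proposal.} The plan is to deduce everything from a single Newton polygon. First I would strip off the trivial root of $f_\phi$: since $f_\phi(x)=x\bigl(Bx^{q^2-1}+Ax^{q-1}+T\bigr)$, the set $V_\phi^*$ is exactly the zero locus of $Bx^{q^2-1}+Ax^{q-1}+T$, which is a polynomial in $x^{q-1}$. Setting $u=x^{q-1}$ turns it into $h(u):=Bu^{q+1}+Au+T$, a polynomial of degree $q+1$ with $h(0)=T\neq 0$. Since $\C_\infty$ is algebraically closed and contains $\F_q$, the map $x\mapsto x^{q-1}$ on $\C_\infty^*$ is surjective with kernel $\F_q^*$, so each of the $q+1$ (necessarily nonzero) roots $u$ of $h$ is the common $(q-1)$-st power of exactly one $\F_q^*$-orbit in $V_\phi^*$, all of whose elements have valuation $v(u)/(q-1)$. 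As $(q+1)(q-1)=q^2-1=|V_\phi^*|$, this sets up a bijection between $\F_q^*$-orbits in $V_\phi^*$ and roots of $h$, so it suffices to compute the root valuations of $h$ via its Newton polygon --- the lower convex hull of $(0,v(T))=(0,-1)$, $(1,v(A))$ and $(q+1,v(B))$.

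The second step is the dichotomy. The point $(1,v(A))$ is a genuine vertex (breaking the slope) precisely when it lies strictly below the segment from $(0,-1)$ to $(q+1,v(B))$, and an elementary computation shows this is equivalent to $(q+1)v(A)-v(B)<-q$, i.e.\ to $v(\jmath)<-q$. Since every $\phi$ satisfies exactly one of $v(\jmath)\geq -q$ or $v(\jmath)<-q$, exactly one of the two cases in the statement occurs. If $v(\jmath)\geq -q$, the Newton polygon is the single segment from $(0,-1)$ to $(q+1,v(B))$, of slope $(v(B)+1)/(q+1)$; hence all roots $u$ have $v(u)=-(v(B)+1)/(q+1)$, so every $\delta\in V_\phi^*$ has $v(\delta)=-(1+v(B))/(q^2-1)$, which is \eqref{allsame}. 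If $v(\jmath)<-q$, the polygon has two segments: the one from $(0,-1)$ to $(1,v(A))$, of slope $v(A)+1$ and horizontal length $1$, yields a single root $u_0$ with $v(u_0)=-(v(A)+1)$; the one from $(1,v(A))$ to $(q+1,v(B))$, of slope $(v(B)-v(A))/q$ and length $q$, yields $q$ roots of valuation $(v(A)-v(B))/q$. Dividing valuations by $q-1$, the $\F_q^*$-orbit lying over $u_0$ --- call it $\F_q^*\eta$ --- consists of elements of valuation $-(1+v(A))/(q-1)$, while the remaining $q^2-q$ elements, i.e.\ $V_\phi\setminus\F_q\eta$, have valuation $(v(A)-v(B))/(q^2-q)$; this is \eqref{bigone}. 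Finally, the inequality $-(1+v(A))/(q-1)>(v(A)-v(B))/(q^2-q)$ reduces, upon clearing the positive factor $q(q-1)$, to $v(B)-(q+1)v(A)>q$, i.e.\ exactly to $v(\jmath)<-q$; so in this case $\F_q^*\eta$ does have strictly larger valuation than the rest of $V_\phi^*$, which completes both directions.

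The only point requiring care is the bookkeeping between roots of $h$ and elements of $V_\phi^*$: one must check that a single root $u_0$ accounts for exactly one $\F_q^*$-orbit (so that the distinguished $\eta$ of case (ii) is canonical up to $\F_q^*$-scaling) and that the boundary value $v(\jmath)=-q$, where $(1,v(A))$ sits on but does not break the hull, falls under case (i). Beyond that, the argument is a routine reading of the Newton polygon, and I do not expect a serious obstacle.
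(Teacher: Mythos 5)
Your proposal is correct and is essentially the paper's argument: the paper reads off the same dichotomy and the same valuations from the Newton polygon of $f_\phi(x)/x = Bx^{q^2-1}+Ax^{q-1}+T$ directly, with vertices $(0,-1)$, $(q-1,v(A))$, $(q^2-1,v(B))$. Your substitution $u=x^{q-1}$ (working with $Bu^{q+1}+Au+T$ and then dividing valuations by $q-1$ while tracking $\F_q^*$-orbits) is only a cosmetic repackaging of that same Newton polygon computation, and all your slope and orbit bookkeeping checks out.
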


\begin{proof}
The lemma follows from an analysis of the Newton polygon of the defining polynomial $f_\phi(x)/x = Bx^{q^2-1} + Ax^{q-1} + T = 0$ of $V_\phi^*$ \cite[Ch.~2]{GossBook}, \cite[\S I.2]{KedlayaBook}.  Indeed the line segment connecting $(0,-1)$ and $(q^2-1,v(B))$ has slope $\frac{v(B) + 1}{q^2-1}$, and one checks that $(q-1,v(A))$ lies on or above this line segment if and only if $v(\jmath) = (q+1)v(A) - v(B) \geq -q$.  Thus $v(\jmath) \geq -q$ if and only if all zeroes of $f_\phi(x)/x$ have valuation $-\frac{v(B)+1}{q^2-1}$.  Otherwise, when $v(\jmath) < -q$, the Newton polygon breaks into two segments: one of width $q-1$ from $(0,-1)$ to $(q-1,v(A))$ of slope $\frac{v(A)+1}{q-1}$, and another of width $q^2-q$ from $(q-1,v(A))$ to $(q^2-1,v(B))$ of slope $\frac{v(B)-v(A)}{q^2-q}$.  The result then follows.
\end{proof}

Guided by the results above, we consider certain families of rank two Drinfeld modules as follows. Fix $0\neq\delta\in \C_\infty$, and set
\begin{equation}
\mathcal{F_\delta}:=\{\textrm{All rank 2 Drinfeld modules $\phi$ such that }\F_q\delta\subset V_\phi\}.
\end{equation}
The following theorem gives a complete description of the cases where the lattice $\La_\phi$ could be recovered by applying $\log_\phi$ to $V_\phi$.

\begin{theorem}\label{periods}
Let $\phi\in\mathcal{F_\delta}$ be given by $\phi_T=T+A\tau+B\tau^2$. Fix a choice of a $(q-1)$-st root of $\frac{T}{B}$ and set
\begin{equation}\label{c}
c:=\delta^{-1}\left(\frac{T}{B}\right)^\frac{1}{q-1}.
\end{equation}
Let $\zeta$ be a root of
\begin{equation}\label{delta}
x^q-\delta^{q-1}x=c.
\end{equation}
We have the following cases.
\begin{enumerate}
\item If $v(\jmath) \geq -q$, then $v(\delta)=v(\zeta)=\frac{-(1+v(B))}{q^2-1}$. Hence $\log_\phi$ converges at $\delta$ and $\zeta$, and the period lattice $\La_\phi$ is generated by $\{T\log_\phi(\delta),T\log_\phi(\zeta)\}$.
\medskip
\item If $v(\jmath)<-q$ and $v(\delta)=\frac{-(1+v(A))}{q-1}$, then $\log_\phi$ converges at $\delta$, and $T\log_\phi(\delta)$ is a period in $\La_\phi$. Furthermore $v(\zeta)=\frac{v(A)-v(B)}{q^2-q}$, and $\log_\phi$ converges on all of $V_\phi$ if and only if
\begin{equation}\label{jq2}
v(\jmath)>-q^2.
\end{equation}
If \eqref{jq2} is satisfied then the period lattice $\La_\phi$ is generated by $\{T\log_\phi(\delta),T\log_\phi(\zeta)\}$.
\medskip
\item If $v(\jmath)<-q$ and $v(\delta)=\frac{v(A)-v(B)}{q^2-q}$, then $v(\zeta)=\frac{-(1+v(A))}{q-1}$ and $\log_\phi$ converges at $\zeta$, hence $T\log_\phi(\zeta)$ is a period in $\La_\phi$. Again $\log_\phi$ converges on all of $V_\phi$ if and only \eqref{jq2} is satisfied, in which case
 the period lattice $\La_\phi$ is generated by $\{T\log_\phi(\delta),T\log_\phi(\zeta)\}$.
\end{enumerate}
\end{theorem}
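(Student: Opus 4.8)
The argument naturally divides into four parts: exhibiting $\zeta$ as a second $T$-division point, computing $v(\delta)$ and $v(\zeta)$ in the three regimes, locating the region of convergence of $\log_\phi$, and verifying that $T\log_\phi$ carries the chosen division points to an $\A$-basis of $\La_\phi$. First I would check directly that $\zeta\in V_\phi$: feeding the relation $\zeta^q=\delta^{q-1}\zeta+c$ back into itself gives $\zeta^{q^2}=\delta^{q^2-1}\zeta+\delta^{q^2-q}c+c^q$, so $f_\phi(\zeta)=(B\delta^{q^2-1}+A\delta^{q-1}+T)\zeta+c\,(B\delta^{q^2-q}+Bc^{q-1}+A)$; the first factor vanishes since $\delta\in V_\phi^*$, and since $c^{q-1}=\delta^{1-q}T/B$ the second factor, multiplied by $\delta^{q-1}$, again equals $B\delta^{q^2-1}+A\delta^{q-1}+T=0$. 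As $c\neq0$ we get $\zeta\neq0$ and $\zeta\notin\F_q\delta$, so $\{\delta,\zeta\}$ is an $\F_q$-basis of $V_\phi$; moreover the roots of \eqref{delta} form a single coset of $\F_q\delta$, so $\zeta$ matters only modulo $\F_q\delta$. In case (iii) I would exploit this: writing $\zeta=a\delta+b\eta$ with $\eta$ as in Lemma~\ref{vdelta} and $b\neq0$ (possible since $\zeta\notin\F_q\delta$ while $\{\delta,\eta\}$ spans $V_\phi$), one replaces $\zeta$ by $\zeta-a\delta\in\F_q^*\eta$, which is again a root of \eqref{delta}. This selection is necessary: the Newton polygon of $x^q-\delta^{q-1}x-c$ over the vertices $(0,v(c))$, $(1,(q-1)v(\delta))$, $(q,0)$ has in case (iii) a width-one edge and a width-$(q-1)$ edge, so exactly one root has valuation $-(1+v(A))/(q-1)$ while the others have the smaller valuation $v(\delta)$; in cases (i)--(ii) the middle vertex lies on or above the outer edge and all roots share the asserted valuation.

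The valuation of $\delta$ is handed to us by Lemma~\ref{vdelta}, and since $\zeta\in V_\phi^*$ the same lemma gives $v(\zeta)$: in case (i) both equal $-(1+v(B))/(q^2-1)$; in case (ii) $\delta$ lies on $\F_q\eta$ so $\zeta$ does not and $v(\zeta)=\tfrac{v(A)-v(B)}{q^2-q}$; in case (iii), with $\zeta$ chosen on $\F_q\eta$, $v(\zeta)=-(1+v(A))/(q-1)$. A short check shows that in each case the larger of $v(\delta),v(\zeta)$ is $1+\rho_\phi$ --- indeed it is the largest valuation occurring on $V_\phi^*$, which by Lemma~\ref{vdelta} equals $1+v(\lambda_*)$ for a nonzero vector $\lambda_*\in\La_\phi$ of maximal valuation (recall $e_\phi$ preserves valuations above $v(\lambda_*)$ by \eqref{lattice}), so that $v(\lambda_*)=\rho_\phi$ --- while the smaller of the two exceeds $\rho_\phi$ automatically in case (i) and, in cases (ii)--(iii), precisely when $v(\jmath)>-q^2$. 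Combined with the reformulation of Corollary~\ref{logorder} ($\log_\phi$ converges at $z$ iff $v(z)>\rho_\phi$), this yields all the convergence assertions and the equivalences with \eqref{jq2}.

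For the period statements: $\delta$ is a $T$-division point, so $\delta=e_\phi(\mu)$ for some $\mu\in T^{-1}\La_\phi$; when $\log_\phi$ converges at $\delta$ we have $v(\delta)>\rho_\phi=v(\lambda_*)$, and since $\log_\phi$ agrees with the bijective inverse of $e_\phi$ on $\{v>v(\lambda_*)\}$ we get $\mu=\log_\phi(\delta)$, whence $T\log_\phi(\delta)=T\mu\in\La_\phi$ (and likewise for $\zeta$); alternatively $e_\phi(T\log_\phi(\delta))=\phi_T(e_\phi(\log_\phi\delta))=\phi_T(\delta)=0$ by \eqref{expFE}. For the generation statement --- claimed exactly when $\log_\phi$ converges at both $\delta$ and $\zeta$ --- put $\lambda_1=T\log_\phi(\delta)$, $\lambda_2=T\log_\phi(\zeta)$. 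By Corollary~\ref{logrange} these have valuations $v(\delta)-1$ and $v(\zeta)-1$, both exceeding $\rho_\phi-1=v(\lambda_*)-1$, and they are $\K$-linearly independent because their reductions mod $T\La_\phi$ correspond, under $\La_\phi/T\La_\phi\cong V_\phi$, to the basis $\{\delta,\zeta\}$. Now given $\lambda\in\La_\phi$, write $\lambda=p\lambda_1+q\lambda_2$ with $p,q\in\K\subset\K_\infty$, and let $p_0,q_0\in\A$ be the ``polynomial parts'' of $p,q$ (so $v(p-p_0),v(q-q_0)\geq1$); then $\lambda-p_0\lambda_1-q_0\lambda_2=(p-p_0)\lambda_1+(q-q_0)\lambda_2$ lies in $\La_\phi$ and has valuation $>v(\lambda_*)$, hence is $0$, so $\lambda=p_0\lambda_1+q_0\lambda_2\in\A\lambda_1+\A\lambda_2$ and $\La_\phi=\A\lambda_1+\A\lambda_2$.

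I expect the bookkeeping in the middle two paragraphs to be the delicate part --- in particular the selection of the correct root $\zeta$ in case (iii) (not every root of \eqref{delta} has the stated valuation, nor even a valuation exceeding $\rho_\phi$), and the identification $v(\lambda_*)=\rho_\phi$ that makes the final descent argument work. The remaining ingredients are formal: the functional equations \eqref{expFE} and \eqref{logFE}, the standard facts that $e_\phi$ is entire with kernel $\La_\phi$ and that $\La_\phi/T\La_\phi\cong V_\phi$, and the non-archimedean estimates from Lemma~\ref{vbeta} and its corollaries already developed in \S4.
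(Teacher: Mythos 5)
Your proof is correct, and in its first half it runs parallel to the paper's own argument: identify the roots of \eqref{delta} with $T$-torsion points outside $\F_q\delta$ (you check $f_\phi(\zeta)=0$ directly, while the paper factors $f_\phi(x)=B(x^q-\delta^{q-1}x)^q-T\delta^{1-q}(x^q-\delta^{q-1}x)$ --- the same computation run in the opposite direction), read off $v(\delta)$ and $v(\zeta)$ from Lemma~\ref{vdelta}, and compare with $\rho_A,\rho_B$ via Corollary~\ref{logorder} to get the convergence statements and the equivalence with \eqref{jq2}. You genuinely diverge at two points, both to your credit. First, in case (iii) you observe that only one of the $q$ roots of \eqref{delta} (the one lying in $\F_q^*\eta$) has valuation $\frac{-(1+v(A))}{q-1}$, the remaining $q-1$ roots having valuation $v(\delta)$, and you select that root via the coset/Newton-polygon structure; the paper's proof is silent on this, although the literal claim in (iii) requires exactly this choice (compare Corollary~\ref{eta}, which later constructs that particular root by a series). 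Second, and more substantially, your last step actually proves that $T\log_\phi(\delta)$ and $T\log_\phi(\zeta)$ generate $\La_\phi$, whereas the paper's proof only establishes $\A$-linear independence of $\log_\phi(\delta),\log_\phi(\zeta)$ from the identity $e_\phi(a\log_\phi\delta+b\log_\phi\zeta)=a_0\delta+b_0\zeta$; independence alone does not yield generation, since two independent periods could span a proper finite-index sublattice. Your descent --- subtract the polynomial parts of the $\K$-coordinates and note that the remainder is a lattice element of valuation strictly greater than the maximal valuation $v(\lambda_*)$ of nonzero lattice vectors, hence zero --- closes this, using $v(\delta),v(\zeta)>\rho_\phi\geq v(\lambda_*)$. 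The one compressed spot is that last inequality: all you need is $v(\lambda_*)\leq\rho_\phi$, which follows since $\lambda_*\notin T\La_\phi$, so $e_\phi(\lambda_*/T)\in V_\phi^*$ has valuation $1+v(\lambda_*)$ by the product formula \eqref{lattice}, while Lemma~\ref{vdelta} bounds valuations on $V_\phi^*$ by $1+\rho_\phi$; stating this explicitly would make the descent airtight (the reverse inequality, giving $v(\lambda_*)=\rho_\phi$, comes from applying $T\log_\phi$ to a maximal-valuation torsion point, but it is not needed).
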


\begin{proof}
The condition $\phi\in \mathcal{F}_\delta$ is equivalent to
\begin{equation}\label{delta1}
B\delta^{q^2}+A\delta^q+T\delta=0.
\end{equation}
Substituting \eqref{delta1} in $f_\phi(x)$ we get

\[
\begin{split}
f_\phi(x)&=Bx^{q^2}-(T\delta^{1-q}+B\delta^{q^2-q})x^q+Tx\\
&=B(x^q-\delta^{q-1}x)^q-T\delta^{1-q}(x^q-\delta^{q-1}x).
\end{split}
\]
It follows that any $\zeta\in V_\phi\setminus \F_q\delta$ must satisfy
\begin{equation}\label{zeta}
\zeta^q-\delta^{q-1}\zeta=\delta^{-1}\left(\frac{T}{B}\right)^{\frac{1}{q-1}}.
\end{equation}
Obviously $\frac{-(1+v(B)}{q^2-1}>\rho_B$, and it follows that when $v(\jmath)\geq -q$, $\log_\phi$ converges on all of $V_\phi$. When $v(\jmath)<-q$, we also have $\frac{-(1+v(A))}{q-1}>\rho_A$; however we have
\[
\frac{v(A)-v(B)}{q^2-q}>\rho_A=-\frac{q+v(A)}{q-1} \textrm { if and only if } v(\jmath)>-q^2.
\]
Finally assume that $\delta$ and $\zeta$ are linearly independent over $\F_q$, and that $\log_\phi$ converges at both of them. We need to show that $\log_\phi(\delta)$ and $\log_\phi(\zeta)$ are linearly independent over $\A$. From Lemma \ref{Tdivision} we see that $e_\phi(T^n\log_\phi\eta)=0$ for all $n\geq 1$ and all $\eta\in V_\phi$. Thus if $a,b \in \A$ are polynomials with constant terms $a_0$ and $b_0$ respectively, then
\[
e_\phi(a\log_\phi(\delta)+b\log_\phi(\zeta))=a_0\delta+b_0\zeta,
\]
and it follows that indeed $\{\log_\phi(\delta),\log_\phi(\zeta)\}$ are linearly independent over $\A$.
\end{proof}

\begin{remark} We note that \eqref{delta} could be written as
\begin{equation}\label{art-schr}
X^q-X=\frac{c}{\delta^q},
\end{equation}
where $X:=\frac{x}{\delta}$. Thus computing $\zeta$ is reduced to the extraction of an Artin-Schreier root.
\end{remark}

\section{An Analytic Expression for Periods}
In the previous section we obtained a procedure for computing periods which involved the extraction of certain roots. In this section we show that under additional conditions (cf.~\eqref{fstar}) we can obtain a completely analytic expression for the periods. We start with a lemma on expressing the roots of a certain algebraic equation in terms of series.

\begin{lemma}\label{equation}
Let $C,\delta \in \C_\infty\setminus\{0\}$. If
\begin{equation}\label{Cineq}
v(C)>qv(\delta)
\end{equation}
then the set of solutions of the equation
\begin{equation}\label{Ceqn}
x^q-\delta^{q-1}x=C
\end{equation}
is given by
\begin{equation}\label{sersol}
\F_q\delta-\delta\sum_{i=0}^\infty \left(\frac{C}{\delta^q}\right)^{q^i}.
\end{equation}
\end{lemma}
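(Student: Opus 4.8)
The plan is to verify directly that the set described in \eqref{sersol} consists of solutions of \eqref{Ceqn}, and then to count: since \eqref{Ceqn} is an $\F_q$-linear (Artin--Schreier type) equation of degree $q$ in $x$, its solution set is either empty or a coset of the $1$-dimensional $\F_q$-vector space of solutions of the homogeneous equation $x^q - \delta^{q-1}x = 0$, which is precisely $\F_q\delta$ (the roots of $x^q - \delta^{q-1}x = x\prod_{c\in\F_q^*}(x - c\delta)$). So it suffices to exhibit one solution, and the candidate is $\xi := -\delta\sum_{i=0}^\infty (C/\delta^q)^{q^i}$; then the full solution set is $\xi + \F_q\delta$, which is the same set as written in \eqref{sersol} since $\F_q\delta = -\F_q\delta$.

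First I would address convergence of the series defining $\xi$. Writing $u := C/\delta^q$, hypothesis \eqref{Cineq} gives $v(u) = v(C) - q\,v(\delta) > 0$, hence $v(u^{q^i}) = q^i v(u) \to +\infty$, so $\sum_{i\geq 0} u^{q^i}$ converges in $\C_\infty$ and $\xi$ is a well-defined element of $\C_\infty$. Next I would substitute $\xi$ into the left-hand side of \eqref{Ceqn}. Using that $z\mapsto z^q$ is additive and continuous on $\C_\infty$, we compute
\[
\xi^q = \Bigl(-\delta\sum_{i=0}^\infty u^{q^i}\Bigr)^q = -\delta^q\sum_{i=0}^\infty u^{q^{i+1}} = -\delta^q\sum_{i=1}^\infty u^{q^i},
\]
while
\[
\delta^{q-1}\xi = -\delta^q\sum_{i=0}^\infty u^{q^i}.
\]
Subtracting, the telescoping leaves $\xi^q - \delta^{q-1}\xi = -\delta^q u^{q^0} \cdot(-1)\cdot(-1)$; more carefully, $\xi^q - \delta^{q-1}\xi = -\delta^q\sum_{i\geq 1}u^{q^i} + \delta^q\sum_{i\geq 0}u^{q^i} = \delta^q u = \delta^q\cdot(C/\delta^q) = C$, as required.

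Finally I would assemble the count. The difference of any two solutions of \eqref{Ceqn} satisfies the homogeneous equation $x^q - \delta^{q-1}x = 0$, whose solution set is exactly $\F_q\delta$ (it has $q$ roots, $\delta\neq 0$, and each $c\delta$ with $c\in\F_q$ works since $(c\delta)^q = c\delta^q = \delta^{q-1}(c\delta)$); conversely adding any element of $\F_q\delta$ to $\xi$ yields another solution by $\F_q$-linearity. Hence the solution set is $\xi + \F_q\delta = \F_q\delta - \delta\sum_{i=0}^\infty(C/\delta^q)^{q^i}$, which is \eqref{sersol}. I do not anticipate a serious obstacle here; the only point requiring a little care is justifying the manipulation of the infinite sum under the $q$-power map (legitimate because $x\mapsto x^q$ is an isometry-like continuous additive map on the complete field $\C_\infty$ and the partial sums converge), and noting that $\F_q\delta = -\F_q\delta$ so the sign in \eqref{sersol} is immaterial.
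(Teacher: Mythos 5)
Your proof is correct and follows essentially the same route as the paper's: convergence of the series from $v(C/\delta^q)>0$, direct substitution (telescoping) to check it solves \eqref{Ceqn}, and a counting argument — the paper simply says a degree-$q$ polynomial has at most $q$ roots, while you phrase it via the coset structure over the kernel $\F_q\delta$, which amounts to the same thing. The extra details you supply (distinctness of the kernel elements, continuity of the $q$-power map) are fine; the remark that $\F_q\delta=-\F_q\delta$ is unnecessary since $\xi+\F_q\delta$ is already the set in \eqref{sersol}.
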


\begin{proof}
Condition \eqref{Cineq} guarantees the convergence of the infinite series. It can easily be seen that it satisfies \eqref{Ceqn}. Finally, notice that a polynomial of degree $q$ can have at most $q$ distinct solutions.
\end{proof}

\begin{corollary}\label{eta}
Let $\phi_T=T+A\tau+B\tau^2$ be a Drinfeld module with $v(\jmath)<-q$, and assume that $\delta\in V_\phi$ with $v(\delta)=\frac{v(A)-v(B)}{q^2-q}$. Fix a choice of a $(q-1)$-root of $\frac{T}{B}$. Then the unique subspace of $V_\phi$ where the valuation of the nonzero elements is $\frac{-(1+v(A))}{q-1}$ is generated by
\begin{equation}\label{etaseries}
\eta=-\delta \sum_{n=0}^\infty \left(\frac{T}{\delta^{q^2-1}B}\right)^\frac{q^n}{q-1}.
\end{equation}
\end{corollary}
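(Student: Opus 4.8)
The plan is to realize $\eta$ as the distinguished solution of the Artin--Schreier-type equation \eqref{zeta} and to extract it via Lemma~\ref{equation}. First I would note that since $\F_q\delta\subset V_\phi$ the module $\phi$ lies in $\mathcal{F}_\delta$, so \eqref{delta1} holds and the factorization carried out in the proof of Theorem~\ref{periods} applies: writing $g(x):=x^q-\delta^{q-1}x$ one has $f_\phi(x)=B\,g(x)^q-T\delta^{1-q}g(x)=g(x)\bigl(B\,g(x)^{q-1}-T\delta^{1-q}\bigr)$. Since $g$ is $\F_q$-linear of degree $q$ with $\delta\neq 0$ a root, $\ker g=\F_q\delta$, so the part of $V_\phi$ lying outside $\F_q\delta$ is exactly the solution set of $g(x)^{q-1}=T\delta^{1-q}/B$; in particular every solution of $g(x)=c$, with $c:=\delta^{-1}(T/B)^{1/(q-1)}$ as in \eqref{c}, is a zero of $f_\phi$ and hence lies in $V_\phi$.

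Next I would apply Lemma~\ref{equation} with $C=c$. Its hypothesis \eqref{Cineq} reads $v(c)>q\,v(\delta)$, and a short computation using $v(\delta)=\tfrac{v(A)-v(B)}{q^2-q}$ and $v(T)=-1$ shows this is equivalent to $v(B)-(q+1)v(A)>q$, i.e.\ exactly to the standing hypothesis $v(\jmath)<-q$. The lemma then gives the solution set of $g(x)=c$ as $\F_q\delta-\delta\sum_{n\ge 0}(c/\delta^q)^{q^n}$; rewriting $c/\delta^q=\bigl(T/(\delta^{q^2-1}B)\bigr)^{1/(q-1)}$ for the chosen $(q-1)$-st root identifies the $\F_q\delta$-free solution with the series $\eta$ of \eqref{etaseries}. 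Since $g(\eta)=c$ and $c^{q-1}=T\delta^{1-q}/B$, the factorization above yields $f_\phi(\eta)=g(\eta)\bigl(Bc^{q-1}-T\delta^{1-q}\bigr)=0$, so $\eta\in V_\phi$ and $\F_q\eta$ is a one-dimensional $\F_q$-subspace of $V_\phi$.

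Then I would read off $v(\eta)$ directly from the series. Condition \eqref{Cineq} forces $v(c/\delta^q)>0$ (this is also what guarantees convergence of \eqref{etaseries}), so the valuations $v(\delta)+q^n v(c/\delta^q)$ of the successive terms strictly increase with $n$; hence $v(\eta)=v(\delta)+v(c/\delta^q)=v(c)-(q-1)v(\delta)$, and substituting the values of $v(c)$ and $v(\delta)$ gives $v(\eta)=\tfrac{-(1+v(A))}{q-1}$. By Lemma~\ref{vdelta}(ii) this valuation strictly exceeds the common valuation $\tfrac{v(A)-v(B)}{q^2-q}$ of every element of $V_\phi$ lying outside the unique line described in the statement, so $\eta$ cannot lie outside that line; since $\eta\neq 0$, the line equals $\F_q\eta$, which is the assertion.

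The whole argument is a formal combination of the factorization of $f_\phi$ from Theorem~\ref{periods}, Lemma~\ref{vdelta}, and Lemma~\ref{equation}; the only steps needing care are the bookkeeping of the fixed $(q-1)$-st root across \eqref{c}, \eqref{etaseries}, and Lemma~\ref{equation}, and the valuation identity $v(c)>q\,v(\delta)\iff v(\jmath)<-q$, which I regard as the crux. Everything else — the computation of $v(\eta)$ and the final comparison with Lemma~\ref{vdelta} — is routine.
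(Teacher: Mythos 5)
Your proposal is correct and follows essentially the same route as the paper: verify that $v(c/\delta^q)=\frac{-(v(\jmath)+q)}{q^2-q}>0$ so that Lemma~\ref{equation} applies and the series converges, use the factorization of $f_\phi$ from the proof of Theorem~\ref{periods} to place $\eta$ in $V_\phi$, read off $v(\eta)=\frac{-(1+v(A))}{q-1}$ from the dominant first term, and conclude by Lemma~\ref{vdelta}(ii). The paper's proof is just a terser version of the same argument, with the membership $\eta\in V_\phi$ and the root-of-$T/B$ bookkeeping left implicit.
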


\begin{proof}
With $c$ as in \eqref{c}, we see that
\[
v\left(\frac{c}{\delta^q}\right) = -(q+1)\frac{v(A)-v(B)}{q^2-q}-\frac{1+v(B)}{q-1}
=\frac{-(v(\jmath)+q)}{q^2-q}>0,
\]
and thus the series converges, and the valuation of the sum is equal to that of the first term by the ultrametric property. So indeed
\[
v(\eta)=v(\delta^{1-q}c)=\frac{-q(v(A)-v(B))}{q^2-q}-\frac{(1+v(B))}{q-1}=\frac{-(1+v(A))}{q-1},
\]
and the result follows from Lemma \ref{equation} and Lemma \ref{vdelta}.
\end{proof}

For $0\neq \delta \in \C_\infty$ we consider the subfamily $\mathcal{F}_\delta^\star$ of $\mathcal{F}_\delta$ defined by
\begin{equation}\label{fstar}
\mathcal{F}_\delta^\star:=\left\{\phi \in \mathcal{F}_\delta: v(\jmath)<-q \textrm { and } v(\delta)=\frac{v(A)-v(B)}{q^2-q}\right\}.
\end{equation}
We have the following analytic expression for periods in $\mathcal{F}_\delta^\star$.

\begin{theorem} \label{maxval}
Let $\phi \in \mathcal{F}_\delta^\star$ be given, and set $c$ as in \eqref{c}. Let $\beta_j$ be the coefficients of $\log_\phi$. Set
\begin{equation}\label{a}
\begin{split}
\fraka_\delta(n)&:=T\sum_{j=0}^n \beta_j\delta^{q^j}, \textrm{ and }\\
\frakf(z)&:=\sum_{n=0}^\infty \fraka_\delta(n)z^{q^n}.
\end{split}
\end{equation}
Then the series $\frakf$ converges for $z=\delta^{-q}c$, and $\frakf(\delta^{-q}c)$ is a period of $\La_\phi$ with maximal valuation.
\end{theorem}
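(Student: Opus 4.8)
The plan is to recognize $\frakf(\delta^{-q}c)$ as $-T\log_\phi(\eta)$, where $\eta$ is the distinguished generator of the ``large valuation'' line in $V_\phi$ furnished by Corollary~\ref{eta}, and then to read off all the assertions from results already established.

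\emph{Step 1: identify the point and the torsion generator.} Put $u:=\delta^{-q}c$. A direct manipulation of \eqref{c} gives $\frac{T}{\delta^{q^2-1}B}=u^{q-1}$, so the series of Corollary~\ref{eta} reads $\eta=-\delta\sum_{n\ge0}u^{q^n}$, and $v(u)=v(c/\delta^q)=\frac{-(v(\jmath)+q)}{q^2-q}>0$. By Corollary~\ref{eta} and Lemma~\ref{vdelta}, $\eta\in V_\phi$ and $v(\eta)=\frac{-(1+v(A))}{q-1}$, the largest valuation occurring on $V_\phi^\star$. Since $v(\jmath)<-q$ we have $\rho_\phi=\rho_A=\frac{-(q+v(A))}{q-1}$, hence $v(\eta)=\rho_\phi+1>\rho_\phi$; so by Corollary~\ref{logorder} and Corollary~\ref{logrange}, $\log_\phi$ converges at $\eta$ and $v(\log_\phi(\eta))=v(\eta)$.

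\emph{Step 2: rearrange $\log_\phi(\eta)$.} Expand $\log_\phi(\eta)=\sum_{k\ge0}\beta_k\eta^{q^k}$; since the $q$-power map is additive in characteristic $p$, $\eta^{q^k}=-\delta^{q^k}\sum_{i\ge k}u^{q^i}$. Substituting and interchanging the two summations yields
\[
\log_\phi(\eta)=-\sum_{i=0}^\infty u^{q^i}\sum_{k=0}^i\beta_k\delta^{q^k},
\qquad\text{hence}\qquad
T\log_\phi(\eta)=-\sum_{i=0}^\infty\fraka_\delta(i)\,u^{q^i}=-\frakf(\delta^{-q}c).
\]
The rearrangement is legitimate because $v$ is non-archimedean: it suffices that the general term $\beta_k\delta^{q^k}u^{q^i}$ (with $0\le k\le i$) tends to $0$. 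Using the case $v(\jmath)<-q$ of Lemma~\ref{vbeta} one gets $v(\beta_k\delta^{q^k})=q^k\bigl(v(\delta)-\rho_\phi\bigr)+\rho_\phi$, and a short computation gives the key identity $v(\delta)-\rho_\phi+v(u)=1$; therefore $v(\beta_k\delta^{q^k}u^{q^i})\ge q^k+\rho_\phi\to\infty$ as $k\to\infty$, and for fixed $k$ the term also tends to $0$ as $i\to\infty$. In particular this shows that $\frakf$ converges at $z=\delta^{-q}c$.

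\emph{Step 3: it is a period of maximal valuation.} From $\eta\in V_\phi$, i.e. $\phi_T(\eta)=0$, and the functional equation \eqref{expFE} (together with $e_\phi\circ\log_\phi=\mathrm{id}$ where $\log_\phi$ converges),
\[
e_\phi\bigl(T\log_\phi(\eta)\bigr)=\phi_T\bigl(e_\phi(\log_\phi(\eta))\bigr)=\phi_T(\eta)=0,
\]
so $\frakf(\delta^{-q}c)=-T\log_\phi(\eta)\in\La_\phi$. By Corollary~\ref{logrange}, $v\bigl(\frakf(\delta^{-q}c)\bigr)=-1+v(\eta)=\rho_\phi$. It then remains to observe that $\rho_\phi$ is exactly the maximal valuation attained by a nonzero element of $\La_\phi$; this is the only point that is not purely formal, and I expect it to be the main obstacle. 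It can be extracted from Corollary~\ref{logrange}, which gives $v(\log_\phi(w)-w)>v(w)$ whenever $v(w)>\rho_\phi$: this makes $\log_\phi$ a bijection of $\{v>\rho_\phi\}$ onto itself with $e_\phi\circ\log_\phi=\mathrm{id}$ there, so a nonzero $\lambda\in\La_\phi$ with $v(\lambda)>\rho_\phi$ would be of the form $\lambda=\log_\phi(w_0)$ with $v(w_0)=v(\lambda)$, forcing $w_0=e_\phi(\lambda)=0$, a contradiction. Hence $\frakf(\delta^{-q}c)=-T\log_\phi(\eta)$ is a period of $\La_\phi$ whose valuation $\rho_\phi$ is maximal among nonzero periods, which is the assertion.
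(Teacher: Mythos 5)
Your proof is correct, and its core (Steps 1--2) is essentially the paper's own argument: identify $\eta$ from Corollary~\ref{eta}, expand $\log_\phi(\eta)=\sum\beta_k\eta^{q^k}$, and regroup the double series to recognize $\frakf(\delta^{-q}c)$ (up to the harmless sign coming from $\eta=-\delta\sum u^{q^n}$; since $\La_\phi$ is an $\F_q$-vector space, $-T\log_\phi(\eta)$ is a period exactly when $T\log_\phi(\eta)$ is). In fact your justification of the rearrangement, via summability of the family $\beta_k\delta^{q^k}u^{q^i}$ using the identity $v(\delta)-\rho_\phi+v(u)=1$, is more careful than the paper's formal interchange, and it simultaneously yields the convergence of $\frakf$ at $\delta^{-q}c$, which the paper only revisits in the subsequent proposition. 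Where you genuinely diverge is the maximality claim. The paper argues directly with torsion: if $\lambda'\in\La_\phi$ had $v(\lambda')>\rho_\phi$, then $e_\phi(T^{-1}\lambda')$ would be an element of $V_\phi$ of valuation exceeding $\frac{-(1+v(A))}{q-1}$, contradicting the Newton polygon analysis of Lemma~\ref{vdelta}; this uses only objects already controlled in the paper. You instead argue that $\log_\phi$ is an isometric bijection of the disk $\{v>\rho_\phi\}$ onto itself with $e_\phi\circ\log_\phi=\mathrm{id}$ there, so no nonzero period can lie in that disk. This works, but the surjectivity is the one assertion you state without proof: it does not follow formally from Corollary~\ref{logrange} alone and needs a (standard) successive-approximation argument in the complete field $\C_\infty$, using the uniform estimate from \eqref{orderlog} that for $v(\jmath)<-q$ one has $v(\beta_nz^{q^n})-v(z)\geq(q-1)(v(z)-\rho_\phi)>0$ on the disk; likewise $e_\phi\circ\log_\phi=\mathrm{id}$ at the level of values needs the routine remark that the formal inverse identity may be evaluated inside the region of convergence (a fact the paper itself uses tacitly in Theorem~\ref{periods}). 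So either add those two lines, or replace the end of Step 3 by the paper's shorter torsion-point argument; with that, your proof is complete and buys a slightly stronger intermediate statement (the disk $\{v>\rho_\phi\}$ meets $\La_\phi$ only in $0$) at the cost of an extra analytic lemma.
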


\begin{proof}
Let $\eta$ be as in \eqref{etaseries}. From Theorem \ref{periods} and Corollary \ref{eta}, we see that a period $\la\in \La_\phi$ is given by
\begin{multline*}
\la := T\log_\phi(\eta)=T\log_\phi\left(\sum_{i=0}^\infty \delta^{1-q^{i+1}}c^{q^i}\right)
=T\sum_{j=0}^\infty \beta_j\sum_{i=0}^\infty \delta^{q^j-q^{i+j+1}}c^{q^{i+j}}\\
=\sum_{n=0}^\infty T\left(\sum_{j=0}^n \beta_j\delta^{q^j}\right) \left(\frac{c}{\delta^q}\right)^{q^n}
=\frakf (\delta^{-q}c).
\end{multline*}
By Corollary \ref{logrange} we see that
\[
v(\la)=-1-\frac{1+v(A)}{q-1}=\frac{-(q+v(A))}{q-1}.
\]
If $\lambda^\prime \in \Lambda_\phi$ has larger valuation, then $e_\phi(T^{-1}\lambda^\prime)$ is an element of $V_\phi$ with valuation larger than $\frac{-(1+v(A))}{q-1}$, which contradicts Lemma \ref{vdelta}, and the theorem follows.
\end{proof}

We end this section with a more detailed analysis of the function $\frakf$.

\begin{proposition}
Let $\phi \in \mathcal{F}_\delta^\star$ be given, and let $\fraka_\delta$ and $\frakf$ be as in \eqref{a}. If $-q>v(\jmath)> -q^2$ then $\frakf$ converges if and only if $v(z)>0$, and if $v(\jmath)<-q^2$ then $\frakf$ converges if and only if
\begin{equation}
v(z)>\frac{-(v(\jmath)+q^2)}{q^2-q}>0.
\end{equation}
If $v(\jmath)=-q^2$, then $\frakf$ converges at least for $v(z)>0$.
Furthermore for $n\geq 0$ we have
\begin{equation}
\fraka_\delta(n)=(T\delta)^{q^n}\beta_n-(B\delta^{q^2})^{q^{n-1}}\beta_{n-1},
\end{equation}
and in the range $v(z)>\frac{-(q+v(\jmath))}{q^2-q}$, $\frakf$ has the representation
\begin{equation}\label{frakf}
\frakf(z)=\log_\phi(T\delta z)-\log_\phi(B\delta^{q^2}z^q).
\end{equation}
\end{proposition}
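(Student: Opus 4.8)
The plan is to establish the three assertions in turn: the convergence ranges of $\frakf$, the closed form for $\fraka_\delta(n)$, and finally the identity \eqref{frakf}. The closed form is the keystone, so I would prove it first. Starting from the definition $\fraka_\delta(n) = T\sum_{j=0}^n \beta_j\delta^{q^j}$, the natural move is to use the $\log$-recursion \eqref{tbetan}, which in rank $2$ reads $T\beta_n = \beta_n A_0^{q^n} + \beta_{n-1}A^{q^{n-1}} + \beta_{n-2}B^{q^{n-2}}$; since $A_0 = T$ this is the three-term recurrence $-[n]\beta_n = A^{q^{n-1}}\beta_{n-1} + B^{q^{n-2}}\beta_{n-2}$ already used in the proof of Lemma~\ref{vbeta}. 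However, the cleaner route is to use the constraint $\phi \in \mathcal{F}_\delta$, i.e. \eqref{delta1}: $B\delta^{q^2} + A\delta^q + T\delta = 0$. Multiply \eqref{tbetan} (with $A_0 = T$) by $\delta^{q^n}$ and sum over $0 \le n \le N$; a telescoping should occur precisely because of \eqref{delta1}. Concretely, I expect that $T\fraka_\delta(n) - T\fraka_\delta(n-1) = T^2\beta_n\delta^{q^n}$ on one side matches, on the other side, a combination of $A^{q^{n-1}}\beta_{n-1}\delta^{q^n}$ and $B^{q^{n-2}}\beta_{n-2}\delta^{q^n}$ terms that reorganize — using $\delta$ being a $T$-division point — into the difference $(T\delta)^{q^n}\beta_n - (B\delta^{q^2})^{q^{n-1}}\beta_{n-1}$. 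So the main step is a careful bookkeeping of indices in this telescoping sum, checking the $n=0$ base case ($\fraka_\delta(0) = T\beta_0\delta = T\delta$, which matches $(T\delta)^{q^0}\beta_0 - 0$).

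Once the closed form $\fraka_\delta(n) = (T\delta)^{q^n}\beta_n - (B\delta^{q^2})^{q^{n-1}}\beta_{n-1}$ is in hand, the convergence statement for $\frakf$ follows from Corollary~\ref{logorder}. Indeed, substituting the closed form gives, formally,
\[
\frakf(z) = \sum_{n=0}^\infty (T\delta)^{q^n}\beta_n z^{q^n} - \sum_{n=0}^\infty (B\delta^{q^2})^{q^{n-1}}\beta_{n-1}z^{q^n} = \log_\phi(T\delta z) - \log_\phi(B\delta^{q^2}z^q),
\]
after shifting the index in the second sum (the $n=0$ term there vanishes since $\beta_{-1}=0$). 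This simultaneously proves the representation \eqref{frakf} and reduces convergence of $\frakf$ at $z$ to simultaneous convergence of $\log_\phi$ at $T\delta z$ and at $B\delta^{q^2}z^q$. By Corollary~\ref{logorder} with $\phi \in \mathcal{F}_\delta^\star$ (so $v(\jmath) < -q$, forcing the $\rho_A$ regime), $\log_\phi$ converges at $w$ iff $v(w) > \rho_A = -\frac{q+v(A)}{q-1}$. Using $v(\delta) = \frac{v(A)-v(B)}{q^2-q}$, I would translate $v(T\delta z) > \rho_A$ and $v(B\delta^{q^2}z^q) > \rho_A$ into explicit lower bounds on $v(z)$; the first gives a bound involving $v(\delta)$ and the second a bound scaled by $q$, and comparing which is binding produces the trichotomy on $v(\jmath)$ relative to $-q^2$. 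A routine computation should show the first condition is $v(z) > \frac{-(q+v(\jmath))}{q^2-q}$ and the second is $v(z) > 0$; since $\frac{-(q+v(\jmath))}{q^2-q} > 0 \iff v(\jmath) < -q$, when $-q > v(\jmath) > -q^2$ the binding constraint is $v(z)>0$... one must compare more carefully, and indeed the stated answer $\frac{-(v(\jmath)+q^2)}{q^2-q}$ in the $v(\jmath)<-q^2$ case should emerge from the second (the $B\delta^{q^2}z^q$) condition after simplification. I would double-check this by plugging $v(B) = (q+1)v(A) - v(\jmath)$ into both bounds.

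The main obstacle I anticipate is the telescoping computation for $\fraka_\delta(n)$: the shifts by $q$-powers in the exponents (from Frobenius twists in the $\log$-recursion) make the bookkeeping delicate, and one has to use \eqref{delta1} at exactly the right moment — specifically, raising \eqref{delta1} to the $q^{n-1}$ power to get $(B\delta^{q^2})^{q^{n-1}} + (A\delta^q)^{q^{n-1}} + (T\delta)^{q^{n-1}} = 0$ and using this to convert the $B$-term at level $n$ into $T$- and $A$-terms. A secondary subtlety is ensuring the index-shift manipulation passing from $\sum \fraka_\delta(n)z^{q^n}$ to the difference of two $\log_\phi$ series is valid where the series converge absolutely in the non-archimedean sense (automatic once each piece converges), and that the resulting domain of validity $v(z) > \frac{-(q+v(\jmath))}{q^2-q}$ stated for \eqref{frakf} is consistent with — indeed contained in or equal to — the convergence domain found for $\frakf$. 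Everything else is a direct appeal to Corollary~\ref{logorder}, Corollary~\ref{logrange}, and the already-established properties of $\log_\phi$.
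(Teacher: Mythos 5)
Your telescoping derivation of the closed form $\fraka_\delta(n)=(T\delta)^{q^n}\beta_n-(B\delta^{q^2})^{q^{n-1}}\beta_{n-1}$ — multiplying \eqref{tbetan} by $\delta^{q^n}$ and using \eqref{delta1} raised to the $q^{n-1}$ power to convert the $A$-term — is exactly the paper's argument, and your deduction of the representation \eqref{frakf} on the domain $v(z)>\frac{-(q+v(\jmath))}{q^2-q}$ is likewise correct (both conditions $v(T\delta z)>\rho_A$ and $v(B\delta^{q^2}z^q)>\rho_A$ reduce, after substituting $v(\delta)=\frac{v(A)-v(B)}{q^2-q}$, to that single inequality, as in the paper).

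However, your plan for the convergence ranges has a genuine gap: you reduce convergence of $\frakf$ at $z$ to \emph{simultaneous} convergence of $\log_\phi$ at $T\delta z$ and at $B\delta^{q^2}z^q$, and treat this as an equivalence. That reduction only gives sufficiency. Since both log-conditions are equivalent to $v(z)>\frac{-(q+v(\jmath))}{q^2-q}$, your route would produce this single threshold in every case, which is strictly larger than the thresholds $0$ (for $-q>v(\jmath)>-q^2$) and $\frac{-(v(\jmath)+q^2)}{q^2-q}$ (for $v(\jmath)<-q^2$) asserted in the proposition — so the anticipated trichotomy cannot "emerge" from comparing the two conditions, and the only-if directions would come out false. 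The reason the equivalence fails is cancellation: a short computation with Lemma~\ref{vbeta} shows the two terms $(T\delta)^{q^n}\beta_n$ and $(B\delta^{q^2})^{q^{n-1}}\beta_{n-1}$ in the closed form have \emph{equal} valuation, so $v(\fraka_\delta(n))$ is in general strictly larger than the valuation of either term, and $\frakf$ converges on a strictly larger region than the two-log expression. Indeed the whole point of Theorem~\ref{maxval} and the remark following this proposition is that $\frakf$ converges at $z=\delta^{-q}c$, whose valuation is exactly $\frac{-(q+v(\jmath))}{q^2-q}$, where \eqref{frakf} cannot be used. The correct argument (the paper's) works directly with the partial sums defining $\fraka_\delta(n)$: one computes $v(\beta_j\delta^{q^j})=(q^j-1)\frac{v(\jmath)+q^2}{q^2-q}+v(\delta)$, which is strictly increasing, strictly decreasing, or constant in $j$ according as $v(\jmath)>-q^2$, $v(\jmath)<-q^2$, or $v(\jmath)=-q^2$; the ultrametric inequality then gives $v(\fraka_\delta(n))=v(\delta)-1$ exactly in the first case, $v(\fraka_\delta(n))=v(T\beta_n\delta^{q^n})$ exactly in the second, and $v(\fraka_\delta(n))\geq v(\delta)-1$ in the third, from which the stated convergence ranges (with their only-if directions) follow at once.
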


\begin{proof}
{From~\eqref{orderlog}}, we see that
\[
\begin{split}
v(\beta_n\delta^{q^n})&=(q^n-1)\left(\frac{q+v(A)}{q-1}+\frac{v(A)-v(B)}{q^2-q}\right)+v(\delta)\\
&=(q^n-1)\left(\frac{v(\jmath)+q^2}{q^2-q}\right)+v(\delta).
\end{split}
\]
Thus our analysis naturally breaks into three cases.

\textbf{Case 1: $v(\jmath)>-q^2$.} In this case $v(T\beta_n\delta^{q^n})$ is strictly increasing in $n$, and thus
\[
v(\fraka_\delta(n))=v(\delta)-1 \textrm{ for all } n\geq 0.
\]
 It follows that the series for $\frakf$ converges if and only if $v(z)>0$.

\textbf{Case 2: $v(\jmath)<-q^2$.} In this case $v(T\beta_n\delta^{q^n})$ is strictly decreasing in $n$, and thus
\[
v(\fraka_\delta(n))=v(T\beta_n\delta^{q^n}).
\]
Hence
\[
\begin{split}
v(\fraka_\delta(n)z^{q^n})&=v(z)+v(\delta)-1+(q^n-1)\left(v(z)
+\frac{v(\jmath)+q^2}{q^2-q}\right),
\end{split}
\]
and $\frakf$ converges if and only if $v(z)>\frac{-(v(\jmath)+q^2)}{q^2-q}$.

\textbf{Case 3: $v(\jmath)=-q^2$.} In this case $v(T\beta_n\delta^{q^n})=v(\delta)-1$ for all $n$, and thus
\[
v(\fraka_\delta(n))\geq v(\delta)-1 \textrm{ for all } n\geq 0.
\]
It follows that the series for $\frakf$ converges at least for all $v(z)>0$.

The first part of the proposition follows from the analysis above. To prove the second part, note that  \eqref{tbetan} and \eqref{delta1} give
\[
\begin{split}
T\beta_i\delta^{q^i}&=(T\delta)^{q^i}\beta_i+(A\delta^q)^{q^{i-1}}\beta_{i-1}
+(B\delta^{q^2})^{q^{i-2}}\beta_{i-2}\\
&=(T\delta)^{q^{i}}\beta_i-(T\delta)^{q^{i-1}}\beta_{i-1}
-(B\delta^{q^2})^{q^{i-1}}\beta_{i-1}+(B\delta^{q^2})^{q^{j-2}}\beta_{i-2}.
\end{split}
\]
Hence
\[
\begin{split}
\fraka_\delta(n)&=(T\delta)^{q^n}\beta_n-(B\delta^{q^2})^{q^{n-1}}\beta_{n-1}.
\end{split}
\]
Consequently we (formally) get
\[
\frakf(z)=\log_\phi(T\delta z)-\log_\phi(B\delta^{q^2}z^q).
\]
The expression on the right hand side converges for  $v(T\delta z)>\rho_A$ and $v(B\delta^{q^2}z^q)>\rho_A$. Either statement is equivalent to
\[
v(z)>\frac{-(v(\jmath)+q)}{q^2-q},
\]
and the result follows.
\end{proof}

\begin{remark}
Note that since $v(\delta^{-q}c)=\frac{-(v(\jmath)+q)}{q^2-q}$, we can not use \eqref{frakf} to evaluate $\la=\frakf(\delta^{-q}c)$. Instead we can only use \eqref{a} for that evaluation, and indeed the argument above gives another proof that $\frakf$ does converge at that point.
\end{remark}

\section{An example from complex multiplication} \label{ThakurA}
In \cite{Thakur92}, Thakur determined the power series expansions of the exponential and logarithm functions of sgn-normalized rank~$1$ Drinfeld modules, and he showed how his constructions fit into the more general framework of shtuka functions in~\cite{Thakur93}, which is particular to the rank~$1$ theory.  Thakur's Drinfeld modules, originally studied by Hayes~\cite{Hayes79} in the context of explicit class field theory, are rank~$1$ over extensions of $\A$ but can be thought of as higher rank Drinfeld $\A$-modules with complex multiplication.  Here we consider one of Thakur's examples \cite[Ex.~A]{Thakur92} and compare it to the constructions of the previous sections.

Let $q=3$, and let $y \in \C_\infty$ satisfy $y^2 = T^3-T-1$.  Then define a rank $2$ Drinfeld module $\phi$ by setting
\begin{equation}
  \phi_T := T + y(T^3-T)\tau + \tau^2.
\end{equation}
The module $\phi$ is special in that it has complex multiplication by the ring $\F_3[T,y]$, which itself has class number one, and so $\phi$ is a rank $1$ Drinfeld $\F_3[T,y]$-module but a rank $2$ Drinfeld $\A$-module.
For $n \geq 1$, Thakur lets $[n]_y := y^{3^n}-y$ and sets
\[
  f_n = \frac{[n]_y - y[n]}{[n]-1}, \quad
  g_n = \frac{[n]_y - y^{3^n}[n]}{[n+1]+1}.
\]
He then establishes that if $e_\phi(z) = \sum \alpha_n z^{3^n}$ and $\log_\phi(z) = \sum \beta_n z^{3^n}$, then for $n \geq 1$,
\[
  \alpha_n = \frac{\alpha_{n-1}^3}{f_n}, \quad
  \beta_n = \frac{\beta_{n-1}}{g_n}.
\]
Therefore,
\[
  \alpha_n = \frac{1}{f_n f_{n-1}^3 \cdots f_1^{3^{n-1}}}, \quad
  \beta_n = \frac{1}{g_n g_{n-1} \cdots g_1}.
\]
After some calculations (and using that $v(y) = -\frac{3}{2}$), it follows that for $n \geq 1$,
\begin{align}
v(\alpha_n) &= {\textstyle \frac{1}{2}}(n-2) 3^n, \label{Thakalpha} \\
v(\beta_n) &= {\textstyle -\frac{3}{4}} (3^n-1). \label{Thakbeta}
\end{align}
See also Lutes~\cite[\S IV.C]{LutesThesis}. (In both Lutes and Thakur the formulas differ from the ones above by a factor of $\frac{1}{2}$, as they set $v(T)=-2$ instead of $-1$.)

Certainly the valuation of $\alpha_n$ in \eqref{Thakalpha} is consistent with \eqref{lim}, since $v_0 = -\frac{9}{2}$ in this case.   Now $\jmath(\phi) = y^4(T^3-T)^4$, and so $v(\jmath(\phi)) = -18$.  Therefore \eqref{Thakbeta} matches with Lemma~\ref{vbeta}, and $\log_\phi(z)$ converges for $v(z) > \frac{3}{4}$, which coincides with Corollary~\ref{logorder}.  Now the set $V_\phi$ is generated over $\F_3$ by $e_\phi(1/T)$ and $e_\phi(y/T)$, which have valuations $\frac{7}{4}$ and $-\frac{3}{4}$ respectively (see \cite[Ex.~4.15]{LutesP}), and thus $\phi$ fits into the situation of Theorem~\ref{periods} with $v(\jmath) < -q^2$ (so $\log_\phi$ does not converge on all of $V_\phi$).  However, one can also calculate the period by other means (see \cite[\S III]{GekelerBook}, \cite[\S 7.10]{GossBook}, \cite[Ex.~4.15]{LutesP}), and one finds that a period $\pi$ with maximal valuation has $v(\pi) = \frac{3}{4}$, agreeing with Theorem~\ref{maxval}.

\section{A multinomial formula and supersingular modules}
Let $\frakp \in \A$ be  monic and irreducible of degree $d$, and let $L_\frakp$ be a field extension of $\A/\frakp$.  A Drinfeld module $\phi$ of rank $r$ over $L_\frakp$ is said to be \emph{supersingular} if  $\phi_\frakp$ is purely inseparable.  If $\phi$ has rank $2$ then its supersingularity is equivalent to the vanishing of the coefficient of $\tau^d$ in $\phi_\frakp$ modulo $\frakp$. (See Gekeler \cite{Gekeler88}, \cite{Gekeler91} for further discussion and characterizations of supersingularity).

It is thus natural to seek a characterization of $\phi_{T^m}$ for all $m\in\N$ in terms of the coefficients of $\phi_T$. It turns out that the shadowed partitions of \S 2 above will be crucial here as well. However, we need to introduce just a little more notation before we can state our result. Let $n\in \Z$ and let $S$ be a finite subset of $\N$. Set

\begin{equation}
I_n(S):=\{(k_i)_{i\in S}: k_i \in \N \textrm{ and } \sum_{i\in S}k_i=n\},
\end{equation}
and define
\begin{equation}\label{hndef}
h_n^S:=\sum_{(k_i) \in I_n(S)} T^{\sum_{i\in S}k_i q^i} \in \A.
\end{equation}
Note that if $n<0$, then $I_n(S)=\emptyset$ and hence $h_n^S=0$. Also $h_0^S=1$. (These properties hold even if $S=\emptyset$ since $I_n(\emptyset)=\emptyset$ for all $n\neq 0$ and  $I_0(\emptyset)=\{\emptyset\}$).
We are now ready to state a Drinfeld multinomial Theorem.

\begin{theorem}[Multinomial Formula] \label{multinomialthm}
Let $\phi$ be a rank $r$ Drinfeld module over any $\A$-field $L$ given by
\begin{equation}\label{Ar}
\phi_T=T+\sum_{i=1}^r A_i \tau^i.
\end{equation}
For $m \in \N$ and $n\in \Z$ define the coefficients $c(n;m):=c(n;m;\phi)$ by
\begin{equation}\label{cnm}
\phi_{T^m}=\sum_{n=0}^{rm}c(n;m)\tau^n,
\end{equation}
and $c(n;m)=0$ for $n<0$ or $n>rm$.
Then for all $m,\,  n \geq 0$ we have
\begin{equation}\label{multinomial}
c(n;m)=\sum_{{\bf S}\in P_r(n)} {\bf A^S}\cdot h_{m-|{\bf S}|}^{(\cup {\bf S}\cup \{n\})}.
\end{equation}
\end{theorem}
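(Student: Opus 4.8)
The plan is to proceed by induction on $m$, exploiting the recursion $\phi_{T^{m}} = \phi_T \cdot \phi_{T^{m-1}}$ in $L\{\tau\}$, which at the level of coefficients reads
\[
c(n;m) = \sum_{i=0}^{r} A_i\, c(n-i;m-1)^{q^i},
\]
with $A_0 = T$. For $m=0$ one has $\phi_{T^0} = 1$, so $c(0;0)=1$ and $c(n;0)=0$ for $n>0$; on the right-hand side of \eqref{multinomial}, since $h^{S}_{k}=0$ for $k<0$ and only ${\bf S}=\emptyset\in P_r(0)$ can have $|{\bf S}| \le 0$, the formula gives exactly $c(0;0)=h^{\{0\}}_{0}=1$, and for $n>0$ every summand carries $h^{(\cup{\bf S}\cup\{n\})}_{m-|{\bf S}|}$ with $m-|{\bf S}| = -|{\bf S}| \le 0$; the one potentially nonzero case $|{\bf S}|=0$ forces ${\bf S}=\emptyset$, which is impossible in $P_r(n)$ for $n>0$. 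So the base case checks out.

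For the inductive step, I would substitute the formula for $c(n-i;m-1)$ into the recursion. The $i=0$ term contributes $T\cdot\sum_{{\bf S}\in P_r(n)}{\bf A^S} h^{(\cup{\bf S}\cup\{n\})}_{m-1-|{\bf S}|}$, and for $1\le i\le r$ the term $A_i\,c(n-i;m-1)^{q^i}$ contributes, after raising to the $q^i$-th power (which turns ${\bf A^S}$ into ${\bf A}^{{\bf S}+i}$ and $h^{T}_{k}$ into $h^{T+i}_{k}$, using that $w(S)$ scales to $q^i w(S)$ under the shift and the sum $\sum k_\ell q^\ell$ scales likewise), a sum over $P_r(n-i)$ of terms $A_i\,{\bf A}^{{\bf S}+i} h^{((\cup{\bf S}+i)\cup\{n\})}_{m-1-|{\bf S}|}$. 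Now I invoke Lemma~\ref{facts1}(ii): the bijection \eqref{map} identifies $P_r(n-i)$ with $P^i_r(n)$, sending ${\bf S}$ to a tuple ${\bf S}'$ with $0\in S'_i$, $\cup{\bf S}' = (\cup{\bf S}+i)\cup\{0\}$, and $|{\bf S}'| = |{\bf S}|+1$, and ${\bf A}^{{\bf S}'} = A_i\cdot{\bf A}^{{\bf S}+i}$. Under this identification the $i$-th term becomes $\sum_{{\bf S}'\in P^i_r(n)}{\bf A}^{{\bf S}'}\,h^{((\cup{\bf S}'\setminus\{0\})\cup\{n\})}_{m-|{\bf S}'|}$, while the $i=0$ term, summed over $P_r(n) = \bigsqcup_i P^i_r(n)$ (Lemma~\ref{facts1}(i)), becomes $\sum_{{\bf S}'\in P^i_r(n)}T\cdot{\bf A}^{{\bf S}'}\,h^{(\cup{\bf S}'\cup\{n\})}_{m-|{\bf S}'|}$ (here $|{\bf S}'|$ absorbing the $-1$ since the $i=0$ exponent index was $m-1-(|{\bf S}'|-1)$). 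Summing the two, for each fixed ${\bf S}'\in P^i_r(n)$ I get $T\cdot h^{(\cup{\bf S}'\cup\{n\})}_{m-|{\bf S}'|} + h^{((\cup{\bf S}'\setminus\{0\})\cup\{n\})}_{m-|{\bf S}'|}$, and the claim \eqref{multinomial} for $(n,m)$ follows once I show this equals $h^{(\cup{\bf S}'\cup\{n\})}_{m-|{\bf S}'|+1}$... wait, I need to recheck the exponent bookkeeping, but the essential identity I will need is a recursion for $h$: for a finite set $S\ni 0$ and any $\ell$,
\[
h^{S}_{\ell} = T\, h^{S}_{\ell-1} + h^{S\setminus\{0\}}_{\ell},
\]
obtained by splitting $I_\ell(S)$ according to whether $k_0\ge 1$ (factor out one power of $T^{q^0}=T$, reducing to $I_{\ell-1}(S)$) or $k_0=0$ (reducing to $I_\ell(S\setminus\{0\})$).

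The main obstacle I anticipate is purely bookkeeping: keeping the exponent $m-|{\bf S}|$ consistent through the shift ${\bf S}\mapsto{\bf S}+i$ and through the insertion of $0$ into $S_i$ (which increments $|{\bf S}|$ by one), and making sure the degenerate cases ($\cup{\bf S} = \emptyset$, or $n\in\cup{\bf S}$ so that $\cup{\bf S}\cup\{n\} = \cup{\bf S}$ — note $n\notin\{0,\dots,n-1\}$ so actually $n$ is always a genuinely new element, which simplifies matters) are handled so that the auxiliary $h$-recursion applies with $S = \cup{\bf S}'\cup\{n\}$ and $0\in S$. Once the shift-compatibility $\big(h^S_k\big)^{q^i} = h^{S+i}_k$ and the $h$-recursion above are established as short lemmas, the induction closes by matching the reorganized recursion term-by-term against $h^{(\cup{\bf S}'\cup\{n\})}_{m-|{\bf S}'|+1}$ over the partition $P_r(n) = \bigsqcup_{i=1}^r P^i_r(n)$.
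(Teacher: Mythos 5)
Your proposal is correct and follows essentially the same route as the paper: induction on $m$ via the recursion $c(n;m)=\sum_{i=0}^r A_i\,c(n-i;m-1)^{q^i}$, the shift compatibility $\bigl(h^S_k\bigr)^{q^i}=h^{S+i}_k$, the identification of $P_r(n-i)$ with $P_r^i(n)$ from Lemma~\ref{facts1}(ii), and the splitting identity $h^S_\ell = T\,h^S_{\ell-1}+h^{S\setminus\{0\}}_\ell$ for $0\in S$, which is exactly the bracketed combination the paper uses in its inductive step. The only slip is in the $i=0$ term, whose exponent index is $m-1-|\mathbf{S}'|$ rather than $m-|\mathbf{S}'|$; with that correction the two contributions match the stated $h$-recursion with $\ell=m-|\mathbf{S}'|$ and the induction closes as you indicate.
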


\begin{proof}
We proceed by induction on $m$. It is easy to verify that formula \eqref{multinomial} gives $c(0;0)=1$ since $P_r(0)=\{\emptyset\}$. For $n>0$ and ${\bf S}\in P_r(n)$ we must have $|{\bf S}|>0$,
hence $m-|{\bf S}|<m$, and thus formula \eqref{multinomial} gives $c(n;0)=0$ for all $n>0$. So the statement is valid for $m=0$ since $\phi_1=1$.
Next we note that, because of the identity $\phi_{T^{m+1}}=\phi_T(\phi_{T^m})$,
the coefficients $c(n;m)$ satisfy the recursion formula
\begin{equation}\label{crecursion}
c(n;m+1)=Tc(n;m)+\sum_{i=1}^r A_i\cdot c(n-i;m)^{q^i}.
\end{equation}
Thus, by the induction hypothesis we have
\begin{equation}\label{step1}
c(n;m+1)=T\sum_{{\bf S}\in P_r(n)} {\bf A^S}\cdot h_{m-{\bf |S|}}^{(\cup {\bf S}\cup \{n\})}
+\sum_{i=1}^r A_i \sum_{{\bf S}^{(i)}\in P_r(n-i)} \left({\bf A}^{{\bf{S}}^{(i)}}\right)^{q^i} \left(h_{m- |\bf{S}^{(i)}|}^{(\cup {\bf S}^{(i)}\cup \{n-i\})}\right)^{q^i}
\end{equation}
Note that $A_j^{q^i\cdot w(S_j^{(i)})}=A_j^{w(S_j^{(i)}+i)}$,  $A_i\cdot A_i^{q^i\cdot w(S_i^{(i)})}=A_i^{w(\{0\}\cup (S_j^{(i)}+i))}$, and that
\[
 \left(h_{m- |{\bf S}^{(i)}|}^{(\cup {\bf S}^{(i)}\cup \{n-i\})}\right)^{q^i}= h_{m- |{\bf S}^{(i)}|}^{(\cup ({\bf S}^{(i)}+i)\cup \{n\})}.
\]
Using the identification \eqref{map} of $P_r(n-i)$ and $P_r^i (n)$ we see that \eqref{step1} becomes
\begin{equation}\label{step2}
\begin{split}
c(n;m+1)&=\sum_{i=1}^r \sum_{{\bf S} \in P_r^i(n)} {\bf A^S}
\cdot\left[Th_{m-|{\bf S}|}^{(\cup {\bf S}\cup \{n\})}+ h_{m-[(|S_i|-1)+\sum_{j\neq i} |S_j|]}^{(\cup_{j=1}^r (S_j\setminus \{0\})\cup\{n\})}\right]\\
&=\sum_{i=1}^r \sum_{{\bf S} \in P_r^i(n)} {\bf A^S}\cdot h_{m+1-|{\bf S}|}^{(\cup {\bf S}\cup \{n\})},
\end{split}
\end{equation}
which proves the result, since the summands are uniform in $i$ and the sets $P_r^i(n)$ partition $P_r(n)$.
\end{proof}

\begin{corollary} \label{supersingular}
Let $\frakp \in \A$ be a monic prime of the form
\begin{equation}
\frakp=\sum_{i=0}^{d} \mu_i T^i,
\end{equation}
and let $L_\frakp$ be a field extension of $\A/\frakp$. Let $\phi$ be a rank $2$ Drinfeld module over $L_\frakp$ given by
\[
\phi_T={T}+A\tau+B\tau^2,
\]
and let $c(n;m)$ be as in \eqref{multinomial}, then $\phi$ is supersingular at $\frakp$ if and only if
\begin{equation}\label{SS}
\sum_{i=\lceil\frac{d}{2}\rceil}^d \mu_ic(d;i)\equiv 0 \pmod \frakp .
\end{equation}
\end{corollary}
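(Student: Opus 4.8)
The plan is to unwind the definition of supersingularity at $\frakp$ via the multinomial formula and the expansion of $\frakp$ in powers of $T$. First I would write $\phi_\frakp = \sum_{i=0}^d \mu_i \phi_{T^i}$, so that the coefficient of $\tau^n$ in $\phi_\frakp$ is $\sum_{i=0}^d \mu_i c(n;i)$, where $c(n;i) = c(n;i;\phi)$ is as in Theorem~\ref{multinomialthm}. Since $\phi_\frakp$ has degree $2d$ in $\tau$ (as $\phi$ has rank $2$ and $\deg_T \frakp = d$), and since $\phi_\frakp$ reduces mod $\frakp$ to $\frakp(T) = 0$ in the constant term, supersingularity (purely inseparable $\phi_\frakp$) is equivalent to the vanishing mod $\frakp$ of every coefficient of $\tau^n$ for $n$ strictly between $0$ and $2d$; by the stated rank~$2$ criterion it suffices to check the single coefficient of $\tau^d$. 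So the goal reduces to showing
\[
\sum_{i=0}^d \mu_i c(d;i) \equiv \sum_{i=\lceil d/2\rceil}^d \mu_i c(d;i) \pmod{\frakp},
\]
i.e. that the terms with $i < \lceil d/2\rceil$ contribute nothing.

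The key observation is that $c(d;i) = 0$ whenever $d > 2i$, purely for degree reasons: $\phi_{T^i} = \sum_{n=0}^{2i} c(n;i)\tau^n$ has $\tau$-degree exactly $2i$, so $c(d;i)$ vanishes identically (not just mod $\frakp$) once $d > 2i$, that is, once $i < d/2$. Hence in the sum $\sum_{i=0}^d \mu_i c(d;i)$ only the indices $i \geq d/2$ survive, and since $i$ is an integer this means $i \geq \lceil d/2\rceil$. Combining this with the previous paragraph gives exactly the congruence \eqref{SS}. One can alternatively see $c(d;i) = 0$ for $i < d/2$ directly from formula \eqref{multinomial}: a shadowed partition ${\bf S} \in P_2(d)$ has $|{\bf S}| = |S_1| + |S_2|$ with the shadows $S_1, S_1+0$... more carefully, $\cup{\bf S} \subseteq \{0,\dots,d-1\}$ and the $i$ shadows $S_i + j$ partition a $d$-element set, forcing $|S_1| + 2|S_2| = d$ hence $|{\bf S}| = |S_1| + |S_2| \geq \lceil d/2\rceil$; then $h_{m-|{\bf S}|}^{(\cdots)} = 0$ whenever $m - |{\bf S}| < 0$, i.e. whenever $m < \lceil d/2 \rceil$, which again kills $c(d;m)$ for $m < \lceil d/2\rceil$.

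I would therefore organize the proof as: (1) recall that supersingularity of a rank~$2$ module at $\frakp$ of degree $d$ is equivalent to the coefficient of $\tau^d$ in $\phi_\frakp$ being $\equiv 0 \pmod{\frakp}$ (cited from Gekeler and already stated in the text); (2) expand $\phi_\frakp = \sum_{i=0}^d \mu_i \phi_{T^i}$ and read off that this coefficient is $\sum_{i=0}^d \mu_i c(d;i)$; (3) note $c(d;i) = 0$ for $i < \lceil d/2 \rceil$, either by the degree bound $\deg_\tau \phi_{T^i} = 2i$ or via \eqref{multinomial} as above; (4) conclude \eqref{SS}. The only mild subtlety — and the one place a careless argument could go wrong — is the passage from "purely inseparable" to "the single coefficient of $\tau^d$ vanishes mod $\frakp$" for rank~$2$ modules, but this is precisely the characterization quoted from Gekeler at the start of \S 8, so I would simply invoke it rather than reprove it. Everything else is bookkeeping, and no genuine obstacle arises; the content of the corollary is really just the combination of the multinomial formula with this degree vanishing.
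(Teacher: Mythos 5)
Your proposal is correct and follows essentially the same route as the paper: expand $\phi_\frakp=\sum_{i=0}^d\mu_i\phi_{T^i}$, observe that $\deg_\tau\phi_{T^i}=2i$ forces $c(d;i)=0$ for $i<\lceil d/2\rceil$, and then apply the quoted rank~$2$ criterion that supersingularity is the vanishing of the coefficient of $\tau^d$ in $\phi_\frakp$ modulo $\frakp$. The paper's proof is just a terser version of this same bookkeeping, so no further comparison is needed.
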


\begin{proof}
We have
\[
\phi_{\mu_0+\dots+\mu_dT^d}=\sum_{i=0}^d\mu_i\sum_{n=0}^{2i} c(n;i)\tau^n
=\sum_{n=0}^{2d}\left(\sum_{i=\lceil\frac{n}{2}\rceil}^d \mu_ic(n;i)\right)\tau^n,
\]
and the result follows by recognizing the coefficient of $\tau^d$.
\end{proof}

\begin{example}
To illustrate the results above, we identify the condition for a rank $2$ Drinfeld module to be supersingular at a degree $4$ monic prime $\frakp$. By \eqref{SS} this is equivalent to the vanishing modulo $ \frakp$  of
\begin{align}
\mu_2 c(4;2)&{}+\mu_3 c(4;3)+c(4;4) \label{ss4}\\
&=\mu_2B^{1+q^2}+\mu_3[B^{1+q^2}(T+T^{q^2}+T^{q^4})+A^{1+q}B^{q^2}+A^{1+q^3}B^q+A^{q^2+q^3}B] \notag \\
&\hspace*{10pt} {}+B^{1+q^2}(T^2+T^{2q^2}+T^{2q^4}+T^{1+q^2}+T^{1+q^4}+T^{q^2+q^4}) \notag \\
&\hspace*{10pt} {}+A^{1+q}B^{q^2}(T+T^q+T^{q^2}+T^{q^4})
+A^{1+q^3}B^{q}(T+T^q+T^{q^3}+T^{q^4}) \notag \\
&\hspace*{10pt} {}+A^{q^2+q^3}B(T+T^{q^2}+T^{q^3}+T^{q^4})+A^{1+q+q^2+q^3}. \notag
\end{align}
Note that $\{T^{q^i} \pmod \frakp, 0\leq i\leq 3\}$ are the $4$ distinct roots of $\frakp$ in $L_\frakp$. Thus we have the following congruences
\begin{equation}\label{modp}
\begin{split}
\mu_2&\equiv T^{1+q}+T^{1+q^2}+T^{1+q^3}+T^{q+q^2}+T^{q+q^3}+T^{q^2+q^3} \pmod \frakp, \\
\mu_3&\equiv -(T+T^q+T^{q^2}+T^{q^4}) \pmod \frakp,\\
T^{q^4}&\equiv T \pmod \frakp.
\end{split}
\end{equation}
Substituting \eqref{modp} into \eqref{ss4}, a simple computation yields
\begin{multline*}
\mu_2 c(4;2)+\mu_3 c(4;3)+c(4;4) \\ =A^{1+q+q^2+q^3}-[1]A^{q^2+q^3}B-[2]A^{1+q^3}B^q-[3]A^{1+q}B^{q^2}+[2][3]B^{1+q^2}.
\end{multline*}
Finally, dividing the above expression by $B^{1+q^2}$ we see that $\phi$ is supersingular at $\frakp$ if and only if $\jmath(\phi)$ is a root of
\begin{equation}\label{ssj4}
\jmath^{q^2+1}-[1]\jmath^{q^2}-[2]\jmath^{q^2-q+1}-[3]\jmath+[1][3] \equiv 0 \pmod \frakp.
\end{equation}
Using methods from Drinfeld modular forms, Cornelissen~\cite{Cornelissen99a}, \cite{Cornelissen99b} has also developed recursive formulas for polynomials defining supersingular $\jmath$-invariants, and one can successfully compare \eqref{ssj4} with $P_4(\jmath)$ in \cite[(2.2)]{Cornelissen99b}.
\end{example}


\begin{thebibliography}{999}

\bibitem{Carlitz35} L. Carlitz, \emph{On certain functions connected with polynomials in a Galois field}, Duke Math. J. \textbf{1} (1935), 137--168.

\bibitem{Carlitz95} L. Carlitz, \emph{Chapter 19 of ``The arithmetic of polynomials''}, Finite Fields Appl. \textbf{1} (1995), 157--164.

\bibitem{Cornelissen99a} G. Cornelissen, \emph{Deligne's congruence and supersingular reduction of Drinfeld modules}, Arch. Math. (Basel) \textbf{72} (1999), 346--353.

\bibitem{Cornelissen99b} G. Cornelissen, \emph{Zeros of Eisenstein series, quadratic class numbers and supersingularity for rational function fields}, Math. Ann. \textbf{314} (1999), 175--196.

\bibitem{GekelerBook} E.-U. Gekeler, \emph{Drinfeld modular curves}, Lecture Notes in Mathematics, vol. 1231, Springer-Verlag, Berlin, 1986.

\bibitem{Gekeler88} E.-U. Gekeler, \emph{On the coefficients of Drinfeld modular forms},
Invent. Math. \textbf{93} (1988),  667--700.

\bibitem{Gekeler91} E.-U. Gekeler, \emph{On finite Drinfeld modules},
J. Algebra \textbf{141} (1991), 187--203.

\bibitem{GossBook} D. Goss, \emph{Basic Structures of Function Field Arithmetic},
Springer, Berlin, 1996.

\bibitem{Hayes79} D.~R. Hayes, \emph{Explicit class field theory in global function fields}, in: Studies in algebra and number theory, Adv. in Math. Suppl. Stud. \textbf{6}, Academic Press, New York, 1979, pp.~173--217.

\bibitem{KedlayaBook} K.~S. Kedlaya, \emph{$p$-adic Differential Equations}, Cambridge Univ. Press, Cambridge, 2010.

\bibitem{LutesThesis} B.~A. Lutes, \emph{Special values of the Goss $L$-function and special polynomials}, Ph.D. thesis, Texas A\&M University, 2010.  (Available at http://www.math.tamu.edu/$\sim$map/.)

\bibitem{LutesP} B.~A. Lutes and M.~A. Papanikolas, \emph{Algebraic independence of values of Goss $L$-functions at $s=1$}, arXiv:1105.6341, 2011.


\bibitem{Thakur92} D.~S. Thakur, \emph{Drinfeld modules and arithmetic in the function fields}, Internat. Math. Res. Notices \textbf{1992} (1992), no. 9, 185--197.

\bibitem{Thakur93} D.~S. Thakur, \emph{Shtukas and Jacobi sums}, Invent. Math. \textbf{111} (1993), 557--570.

\bibitem{ThakurBook} D.~S. Thakur, \emph{Function Field Arithmetic}, World Scientific Publishing, River Edge, NJ, 2004.

\bibitem{Wade46} L.~I. Wade, \emph{Remarks on the Carlitz $\psi$-functions}, Duke Math. J. \textbf{13} (1946), 71--78.

\end{thebibliography}
\end{document}